\newtheoremstyle{indented}
  {3pt}
  {3pt}
  {\addtolength{\@totalleftmargin}{2em}
   \addtolength{\linewidth}{-2em}
   \parshape 1 2em \linewidth}
  {}
  {\bfseries}
  {.}
  {.5em}
  {}
\theoremstyle{indented}
\newtheorem{theorem}{Theorem}[section]
\newtheorem{corollary}{Corollary}[section]
\newtheorem{lemma}[theorem]{Lemma}
\newtheorem{proposition}[theorem]{Proposition}
\newtheorem{definition}[theorem]{Definition}
\newtheorem{remark}[theorem]{Remark}
\newtheorem{example}[theorem]{Example}
\newcounter{proofcount}
\newcounter{proofstep}[proofcount]
\newcommand{\stepref}[1]{\emph{Step \ref{#1}}}
\DeclareMathOperator{\Tr}{Tr}
\newcommand{\esssup}{\operatornamewithlimits{ess\,sup}}
\newcommand{\essinf}{\operatornamewithlimits{ess\,inf}}
\newcommand{\eqdef}{:=}
\newcommand{\R}{\mathbb{R}}
\newcommand{\N}{\mathbb{N}}
\newcommand{\E}{\mathbb{E}}
\newcommand{\interior}[1]{%
  {\kern0pt#1}^{\mathrm{o}}%
}
\newcommand{\closure}[2][3]{{}\mkern#1mu\overline{\mkern-#1mu#2}}
\def\1{\@ifnextchar[{\@indicatorset}{\@indicator}}
\def\@indicatorset[#1]{\mathbf{1}_{\{#1\}}}
\def\@indicator{\mathbf{1}}
\newcommand{\ysigma}{\sigma}
\newcommand{\ydim}{d}
\newcommand{\adim}{m}
\newcommand{\BMdim}{d}
\newcommand{\dpeyzspace}{\mathcal{O}}
\newcommand{\dpedom}{\dpeyzspace_T}
\newcommand{\pbdry}{\partial_T}
\newcommand{\dpeparabbdry}{\pbdry \dpedom}
\newcommand{\yspace}{\R^\ydim}
\newcommand{\zspacegen}{\mathcal{O}_z} 
\newcommand{\zspacelower}{0}
\newcommand{\zspaceupper}{c}
\newcommand{\zspace}{[\zspacelower,\zspaceupper]}
\newcommand{\zspacebdry}{\{\zspacelower,\zspaceupper\}}
\newcommand{\Hz}{H_z}
\newcommand{\contrProcSet}{\mathcal{A}}
\newcommand{\contrValSet}{\mathbb{A}}
\newcommand{\symmat}{\mathcal{S}}
\newcommand{\sndPSupDiff}{\mathcal{P}^{2,+}}
\newcommand{\sndPSubDiff}{\mathcal{P}^{2,-}}
\newcommand{\sndPSupDiffCl}{\overline{\mathcal{P}}^{2,+}}
\newcommand{\sndPSubDiffCl}{\overline{\mathcal{P}}^{2,-}}
\DeclareMathOperator{\AVaR}{AVaR}
\DeclareMathOperator{\dom}{dom}
\DeclareMathOperator{\intdom}{int\,\dom}
\begin{document}

\title{Stochastic control of optimized certainty equivalents}

\author{Julio Backhoff-Veraguas\thanks{Faculty of Mathematics, University of Vienna, Austria.}
\and
A. Max Reppen\thanks{Questrom School of Business, Boston University, Boston, MA, USA.
Partly supported by the Swiss National Science Foundation grant SNF 181815.}
\and Ludovic Tangpi\thanks{ORFE Department, Princeton University, Princeton, USA. Supported by the NSF grant DMS-2005832.}}

\date{\today}

\maketitle

\begin{abstract}
    \noindent
	Optimized certainty equivalents (OCEs) is a family of risk measures widely used by both practitioners and academics.
	This is mostly due to its tractability and the fact that it encompasses important examples, including entropic risk measures and average value at risk.

	In this work we consider stochastic optimal control problems where the objective criterion is given by an OCE risk measure, or put in other words, a risk minimization problem for controlled diffusions.
	A major difficulty arises since OCEs are often time inconsistent. Nevertheless, via an enlargement of state space we achieve a substitute of sorts for time consistency in fair generality. This allows us to derive a dynamic programming principle and thus recover central results of (risk-neutral) stochastic control theory.
	In particular, we show that the value of our risk minimization problem can be characterized as a viscosity solution of a Hamilton--Jacobi--Bellman--Issacs equation. We further establish a comparison principle and uniqueness of the latter under suitable technical conditions.
\end{abstract}

\section{Introduction and main results}

Let $T\in (0,\infty)$ be a fixed deterministic time horizon and $(\Omega, {\cal F}, P)$ a given probability space equipped with the completed filtration $({\cal F}_t)_{t\in [0,T]}$ of a $\BMdim$-dimensional Brownian motion $W$. Further let $\contrValSet \subseteq \mathbb{R}^{\adim}$ be a compact and convex set, ${\contrProcSet}$ be the set of $\contrValSet$-valued progressively measurable processes, and assume that the functions $b,\sigma$ satisfy
\begin{equation*}%
    \label{eq:ass.b}\global\def\assb{\eqref{eq:ass.b}}
    \tag{A$b\sigma$}
    \left\{
        \begin{array}{l}
        (b,\sigma):[0,T]\times \mathbb{R}^\ydim\times \mathbb{R}^\adim\to \mathbb{R}^\ydim \times \mathbb{R}^{\ydim\times\ydim}\text{ are {jointly continuous and bounded};}\\
         |b(t,y_1,a)-b(s,y_2,a)|{+\|\sigma(t,y_1,a)-\sigma(s,y_2,a)\|}\leq c_2(|t-s| + |y_1-y_2|); \\
       \text{for each $t,y$ the set }
      { K(t,y):=\left\{ (b(t,y,a),\sigma\sigma^\top(t,y,a)):\, a\in \contrValSet  \right\}
\text{ is convex,}}
    \end{array}
    \right.
\end{equation*}
where we use $\|\cdot\|$ to denote the operator norm.
In particular, under condition \eqref{eq:ass.b}, for each $\alpha \in {\contrProcSet}$, the process $Y^{y,\alpha}$ is well-defined:
\begin{equation}%
    \label{eq:control.SDE}
    dY^{y,\alpha}_t = b(t, Y^{y,\alpha}_t,\alpha_t) \,dt + \ysigma(t, Y^{y,\alpha}_t,\alpha_t)\,dW_t,\quad Y^{y,\alpha}_0 = y.
\end{equation}
For the main results in this article we will have to strengthen Assumption \eqref{eq:ass.b} by additionally assuming that 
\begin{itemize}
\item {$\sigma(t,y,a)\sigma^\top(t,y,a)> 0$ in the sense of positive definite matrices; namely that $\sigma(t,y,a)$ is \emph{non-degenerate} at each point $(t,y,a)$}.
\item $\sigma(t,y,a)=\sigma(t,y)$, i.e.\ that $\sigma$ only depends on time and space but not on the control. This will be referred to as the \emph{uncontrolled $\sigma$ case}.
\end{itemize}

Our aim is to study the optimal control of the $\ydim$-dimensional diffusion $Y$ for a cost criterion based on an optimized certainty equivalent (OCE) risk measure $\rho$. That is, for a given function $f$, we focus on the optimal control problem\footnote{As usual, a running cost can also be included by adding an extra state variable.}
\begin{equation*}%
    \label{eq:prob P}\global\def\probP{\eqref{eq:prob P}}
    \tag{P}
    \inf_{\alpha \in {\contrProcSet}}\rho(f(Y^{y,\alpha}_T)).
\end{equation*}
We assume that $f$ satisfies the condition
\begin{equation*}%
    \label{eq:ass:f}\global\def\assf{\eqref{eq:ass:f}}
    \tag{A$f$}
        f:\mathbb{R}^\ydim\to \R \quad \text{is continuous, bounded from below and with polynomial growth}. 
\end{equation*}
Problem \probP{} is a \emph{risk minimization} one, with $\rho(f(Y^{y,\alpha}_T))$ representing the riskiness of $f(Y^\alpha_T)$.
The problem is then to determine the smallest possible risk and the control $\alpha^*$ leading to it.
In order to specify $\rho$, we start with a loss function $l\colon\R\to\R$.
That is, a function satisfying the usual assumptions
\begin{equation*}%
    \label{eq:ass.l}\global\def\assl{\eqref{eq:ass.l}}
    \tag{A$l$}
    \bigg\{\begin{array}{l}
        l \text{ is increasing, convex, bounded from below with at most polynomial growth, and } \\
        l(0)=0, \,l^\ast(1)=0,\text{ and } l(x)>x \text{ for $|x|$ large enough,}
    \end{array}
\end{equation*}
where $l^*$ denotes the convex conjugate of $l$ defined as
\[ l^\ast(z):=\sup_{x\in\R} (xz-l(x)), \quad z\ge0. \]
Note that $l^*$ is valued on the extended real line.
The functional $\rho:L^0\to \R\cup \{+\infty\}$ defined by
\begin{equation}
\label{eq:oce}
	\rho(X) := \inf_{r\in \R}(\E[l(X-r )]+r)
\end{equation}
is an OCE risk measure. In this interpretation we think of $X$ as a financial/economic loss, and $\rho(X)$ represents the level of risk%
\footnote{Strictly speaking, it is $X\mapsto \rho(-X)$ that is a risk measure, but we will work with $\rho$ for notational convenience.}
associated to $X$, or the minimal capital required to make $X$ ``acceptable,'' see e.g. \cite{FS3dr} for details and \cite{ben-tal01,Roboptim} for discussions on the interpretation of OCEs. Notice that, restated for OCE risk measures, Problem \probP{} takes the form:
\begin{equation}\label{eq:Prestated}
    \inf_{\alpha \in {\contrProcSet}, \,r\in \R}(\E[l(f(Y^{y,\alpha}_T)-r )]+r).
\end{equation}

Problems of type \probP{} are sometimes called risk-sensitive decision problems to emphasize the fact that the objective is not to minimize the (linear) mathematical expectation $\E[f(Y^{y,\alpha}_T)]$, but rather a \emph{convex} risk measure (however, the literature on risk-sensitive control overwhelmingly focuses on the entropic risk measure obtained by choosing $l(x) = e^x-1$). In \emph{risk-free} optimization problems one usually defines the value function
\begin{equation*}
 \label{eq:risk free prob}\global\def\probRF{\eqref{eq:risk free prob}}
    \tag{Rf}
  \phi(t,y) = \inf_{\alpha \in {\contrProcSet}}\E[f(Y^{t,y,\alpha}_T)],
\end{equation*}
where $Y^{t, y, \alpha}$ denotes the solution of \eqref{eq:control.SDE} starting at time $t$ from $y$, and derives the associated Bellman equation. 
In contrast, in most cases \probP{} cannot be (directly) solved using Bellman's equation as is done for \probRF.
This is due to the lack of a property called \emph{time-consistency} for the operator $\rho$.
In fact, unless the loss function $l$ is linear or exponential, Bellman's principle of optimality will not apply for Problem \probP{}, hindering the use of standard stochastic control techniques to characterize the value of the problem and/or of the optimal control.

\begin{example}\label{ex:entmmv}
  For $l(x) = e^x - 1$, the OCE $\rho$ becomes the ``entropic'' risk measure.
  This is essentially the only instance satisfying \assl{} leading to a time-consistent risk measure (c.f.\ Remark \ref{rem:caveat}).
  In fact, in this case $\rho$ satisfies
   $\rho(X) = \log \E e^X$ so that, up to a logarithmic transform, Problem \probP{} reduces to a risk-free optimization problem.
   This problem is for instance considered in \cite{Bor-Mat-Sch} in the context of portfolio optimization.
   Another popular risk measure in economics (see e.g.\ \cite{MMR2006}) is the monotone mean-variance, obtained in our framework by taking $l(x) = \frac{((x +1)^+)^2 - 1}{2}$.
   This leads to a time-inconsistent problem.
\end{example}

The aim of the present work is to show that \emph{Problem \probP{} can be still tackled by stochastic control techniques, in spite of time-inconsistency.}
The core idea is to enlarge the state space of the problem and deploy the rich duality theory for risk measures. There are a number of reasons why we think this is a relevant contribution, among which:
\begin{itemize}
\item It is important to know that Problem \probP{} falls into the realm of the well-established theory of stochastic control. I.e.\ there is no need for a radically new theory to deal specifically with OCE risk minimization.
\item Our main results, Theorem \ref{thm:existence} and \ref{thm:uniqueness} below, are a consequence of this stochastic control perspective.
  Therein, we in fact identify a PDE characterizing Problem \probP{}.
  This may be the basis of a future numerical method.
\item  Our PDE will be of singular type, with a discontinuous Hamiltonian. Under suitable assumptions we are able to obtain a comparison principle and therefore the uniqueness for this PDE. This is remarkable given the singularity of the problem.
Our comparison result also provides comparison for the problem in \cite{OCE-PDE} as a special case.
\end{itemize}

We refer the reader to the subsection ``Relation with the literature'' below for a brief history on the idea of state space enlargement and for a summary on existing approaches to risk minimization.

\subsection*{Main results}

We propose that the value function of Problem \probP{}, in its incarnation \eqref{eq:Prestated}, should take the form
\begin{align}\label{eq_V_ext}
V(t,y,z) := \inf_{r\in \R,\, \alpha\in \contrProcSet}(\E[l(f(Y_T^{t,y,\alpha})-r )]+rz),
\end{align}
where the $z\in[0,\infty)$ variable stands for the extension of the state space. Note that $V(0,y,1)$ corresponds to the original problem.
Through convex duality, we will see that this is a natural guess, since it opens up a stochastic game reformulation:
\begin{equation}\label{eq:gametheo}
V(s,y,z) = \inf_{\alpha }\sup_{\beta }\E\left[f(Y^{s,y,\alpha}_T)Z^{s,z,\beta}_T - l^*(Z^{s,z,\beta}_T)\right] .
\end{equation}
This will be made precise in Proposition \ref{pro:open-markov}, but for the time being it suffices to say that $Z$ is an auxiliary controlled density process coming from the dual representation of the OCE risk measures.

Our existence result, Theorem \ref{thm:existence}, characterizes the putative value function in \eqref{eq_V_ext} as a viscosity solution of a second order PDE of Hamilton--Jacobi--Bellman--Issacs type, as can be guessed from the game-theoretic reformulation \eqref{eq:gametheo}. In some cases we prove this solution to be uniquely determined. A major difficulty we encounter is that the (Hamiltonian of the) PDE that naturally emerges from the duality theory, see Equation \eqref{eqn:dpe:int} below, 
is discontinuous.
We refer to Section \ref{sec:existence} for the definition of viscosity solutions in this setting. 

Let
\[\dpeyzspace \eqdef \mathbb{R}^\ydim \times \zspacegen \quad \text{and} \quad \dpedom \eqdef (0, T) \times \dpeyzspace\]
    and $\zspacegen := \intdom(l^*)$ be the interior of the effective domain of $l^*$.
    
    \quad

\begin{theorem}%
    \label{thm:existence}
    If assumptions \assl{}, \assf{} and \assb{} are satisfied, and $\sigma$ is uncontrolled and {non-degenerate}, then 
    it holds
    \begin{equation*}
        \inf_{\alpha\in {\contrProcSet}}\rho(f(Y^{t,y,\alpha}_T)) = V(t,y,1)
    \end{equation*}
    where $V$, defined in \eqref{eq_V_ext}, is a continuous
    viscosity solution of the Hamilton--Jacobi--Bellmann--Isaacs (HJBI) equation
    \global\def\refDPE{(DPE)}%
    \begin{empheq}[left=\hspace*{-1em}\text{\refDPE}\empheqlbrace]{alignat=4}%
        \tag{E}\label{eqn:dpe:int}
        &\begin{aligned}[b]
            -\partial_tV &- \inf_{a \in \contrValSet}b(t, y, a) \partial_yV
            - \frac{1}{2}\Tr\left(\ysigma \ysigma^\top(t,y)\partial^2_{yy}V\right)\\
            &-\sup_{\beta\in\R^d}\left (\frac{1}{2} z^2|\beta|^2\partial^2_{zz}V+ z \,\partial^2_{yz}V \ysigma(t,y) \beta\right) =0
        \end{aligned} \span \quad \text{in } {}& \dpedom, \\
        \tag{$\partial_T$E}\label{eqn:dpe.Tbdry}
        & V(T, y,z) = zf(y)-l^*(z)  & (y,z)\in {}& \dpeyzspace,\\
        \tag{$\partial_{\dpeyzspace}$E}\label{eqn:dpe:spacebdry}
        & V(t,y,z) = z\phi(t,y) - l^*(z)  & (t,y,z) \in {}& [0,T]\times \partial \dpeyzspace.
    \end{empheq}
\end{theorem}
Under slightly stronger conditions, the above value function is actually the unique viscosity solution of the dynamic programming equation, in a large class of functions.
\begin{theorem}\label{thm:uniqueness}
  If in addition to the assumptions of Theorem \ref{thm:existence} we assume that the domain of $l^*$ is compact and $f$ is linearly growing, then $V$ is the unique continuous viscosity solution with linear growth of \refDPE{}.
\end{theorem}

We will see below that in general it holds that $V(t,y,z) = \inf_{\alpha \in \mathcal{A}}\rho^{l_z}(f(Y_T^{t,y,\alpha}))$, where $\rho^{l_z}$ is the OCE with loss function $l_z(x):= l(x/z)$.
The variable $z$ comes from the density of a measure change that we use to extend the state space, thereby making the problem time-consistent.

The existence result applies to both cases in Example \ref{ex:entmmv}.
Existence and uniqueness, on the other hand, applies to the following important case:
\begin{example}\label{ex:avar}
  The average value-at-risk ($\AVaR$) is arguably one of the most used risk measures by practitioners in the financial and actuarial sectors, and by their regulators.
  It is obtained in our framework by taking 
  \begin{equation*}
    l(x) = x^+/\gamma
  \end{equation*}
  for some $\gamma \in (0,1)$.
  That is, $\rho(X)=$ $\AVaR_\gamma(X)$ is the $\AVaR$ at level $\gamma$.
  In this case, $l^*(z)=0$ if $z \in [0,1/\gamma]$ and $+\infty$ if $z \in (1/\gamma, \infty)$.
  Thus, the domain of $l^*$ is the compact interval $[0,1/\gamma]$.
  Under the standing assumptions on $(b,\ysigma)$ and as a consequence of Theorems \ref{thm:existence} and \ref{thm:uniqueness}, we have that
  \begin{equation*}
    V(s,y,z) = \inf_{\alpha \in \contrProcSet}\AVaR_{\gamma z}(zf(Y^{s,y,\alpha}_T))
  \end{equation*}
  is the unique continuous viscosity solution of the HJBI equation \refDPE{}.
  In particular, $V(0,y,1) = \inf_{\alpha \in \contrProcSet}\AVaR_{\gamma}(f(Y^{y,\alpha}_T))$. Details are given at the end of Section \ref{sec:comparison}. 
\end{example}

\begin{remark}\label{rem:caveat}
    If the the cost $f$ is bounded, then the statement of Theorem \ref{thm:existence} remains true even if the loss function $\ell$ does not satisfy the polynomial growth condition in \eqref{eq:ass.l}.
    This allows for instance to apply our result to the entropic risk measure discussed in Example \ref{ex:entmmv}.
\end{remark}
\begin{remark}
If $V$ is a classical solution of \refDPE{}, then a verification argument implies that $\alpha(t,y,z)\in \text{argmin}_{a \in \contrValSet} b(t, y, a) \partial_yV(t,y,z) $, together with $\beta(t,y,z)\in \text{argmax}_{\beta\in \mathbb R^d}\left (\frac{1}{2} z^2|\beta|^2\partial^2_{zz}V(t,y,z)+ z \,\partial^2_{yz}V(t,y,z) \ysigma(t,y) \beta\right)$, is an optimal feedback control in the extended state space. Further, Example \ref{ex:avar} is also illustrative as it highlights how solving \refDPE{} provides more information than just the optimal value of the problem: Following the discussion in  \cite[Section 2.1.2]{OCE-PDE} we have that if $V$ is differentiable, then $\partial_z \,V(0,y,1) =\inf_{\alpha \in \contrProcSet}\text{VaR}_{\gamma}(f(Y^{y,\alpha}_T))$, i.e.\ the minimization of the value-at-risk.
\end{remark}

Let us now comment on the technical difficulties that we encounter when proving {Theorems \ref{thm:existence} and} \ref{thm:uniqueness}:

\begin{remark}
An essential difficulty in our analysis is the singularity of our Hamiltonian.
Indeed, the optimization over $\beta$ causes discontinuity (and explosions) for $\partial^2_{zz} V = 0$.
This issue is overcome for existence in Theorem~\ref{thm:existence} by slightly enlarging the class of viscosity solutions with a weaker solution formulation (see e.g. \cite[Section 9]{CIL} for similar ideas).
Nevertheless, the irregularity of the Hamiltonian is still a major hurdle for uniqueness, especially in the weaker solution formulation.
In fact, the discontinuity of the PDE restricts the choice of penalization functions in the \emph{comparison} proof.
Fortunately, it is possible to construct the penalization functions in such a way that the points of interest in the proof are located where the Hamiltonian is finite. Moreover, we also make ample use of the particular structure of the PDE, in which in infimum and the supremum are separated.
\end{remark}

\subsection*{Relation with the literature}

As already mentioned, we get around the problem of time-inconsistency through an ``enlargement of the state space'' technique.
This approach probably originated (at least as far as risk-sensitive control problems as concerned) in the works \cite{PflugPichlerinconsistency1,PflugPichlerinconsistency2} on optimizations of average value-at-risk in a discrete-time model.
The present article expands on the work \cite{OCE-PDE}, where a state space enlargement was used to show that OCE risk measures can be characterized by viscosity solutions of PDEs. In  \cite{OCE-PDE}, the control is, so to speak, fixed.
By contrast, here
\emph{we further consider optimal control of OCEs and investigate uniqueness of a more general PDE than in \cite{OCE-PDE}.} Beyond \cite{OCE-PDE}, the work closest to ours is \cite{Oksendaltrivial}, proposing a related PDE solution method. However, \cite{Oksendaltrivial} starts by assuming classical solutions exist and considers a jump-diffusion framework. Arguably, our work then formalizes some of the results in \cite{Oksendaltrivial} in the case without jumps.

Other approaches to time inconsistency can be found e.g.\ in \cite{EkelandLazrak,Hu-Jin-Zhou17,Bjork-Khapka-Murgoci17} for approaches based on equilibrium strategies, in \cite{ZhouLi,Bensou-Wong-Yam,He-Hu-Obloj-Zhou} and the series of papers by \citeauthor*{Christensen20} \cite{Christensen20,Christensen19-lin,Christensen19} for approaches based on pre--committed strategies.
We further refer to \cite{Bauerleinconsistent,CTMP,Shapiro-letter} for discrete-time formulations and to \cite{MillerYang,Kar-Ma-Zha} for continuous-time formulations.

Compared to these works, the theoretical appeal of our method is that it allows us to use stochastic control arguments to deal with the risk-sensitive problem \probP{}. Moreover, Theorem \ref{thm:existence} is of practical interest since it transforms the (numerical) computation of the value of the problem \probP{} into a question of numerical approximation of a partial differential equation. For this reason, having obtained uniqueness is a crucial first step in developing a PDE-based numerical method.

The central argument allowing for the enlargement of state space and hence leading to Theorem \ref{thm:existence} is to steer the minimization problem \probP{} into a stochastic differential game through the dual representation of the risk measure~$\rho$:
\begin{equation}
\label{eq:rob rep RM}
	\rho(X) = \sup_{Z\in L^1_+}(\E[XZ] - \E[l^*(Z)]),
\end{equation}
{see e.g.\ \cite{ben-tal01}.}
Notice however that in the literature on stochastic differential games, admissible strategies are often defined on much smaller sets.
Most papers consider Elliot--Kalton strategies introduced in \cite{FlemSoug}, or ``elementary strategies'' cf.\ \cite{Sirbugames}.
Such formulations cannot be adopted here since the differential game organically emerges from the problem.
This should also shed some light on the fact that the optimization problem \probP{} is characterized by an HJBI equation, and not an Hamilton--Jacobi--Bellman (HJB) equation.
En route to the proof of our existence result, we will also show (Proposition \ref{pro:open-markov}) that
\begin{equation*}
	\inf_{\alpha\in {\contrProcSet}^{M,L}}\rho(f(Y^\alpha_T)) = V(0,y,1)
\end{equation*}
where ${\contrProcSet}^{M,L}$ is the set of \emph{Markovian controls} which are Lipschitz continuous.
In other words, the open loop control problem and the Markov control problem have the same value. 
This is a technical contribution which we also want to emphasize.
The present paper extends \cite{OCE-PDE} in which a PDE characterization of $\rho(f(Y^\alpha_T))$ was obtained, for a given (and fixed) $\alpha \in \mathcal{A}^{M,L}$.
More precisely, the paper \cite{OCE-PDE} is concerned with the evaluation of the riskiness of a given contingent claim and shows that if this claim arises from a diffusion, then its riskiness can be evaluated by solving an HJB equation.
Here we go one step further by considering the controlled case in which an agent seeks to compute the minimum risk; and we complement \cite{OCE-PDE} by deriving uniqueness of the HJBI equation characterizing the value function of the problem.

Articles dealing with optimal control of average value-at-risk include \cite{PflugPichlerinconsistency1,PflugPichlerinconsistency2,CTMP,Bauerleinconsistent} and \cite{MillerYang}.
The papers \cite{PflugPichlerinconsistency1,PflugPichlerinconsistency2,Bauerleinconsistent} present discrete-time models and propose time-consistent reformulations allowing to solve $\AVaR$ optimization problems.
The articles \cite{CTMP} and \cite{MillerYang} focus on computational issues and propose algorithms allowing to compute value functions of $\AVaR$ optimization problems despite the absence of dynamic programming principles. Interestingly, the article \cite{MillerYang} makes use of the bilevel optimization form of the (primal) problem, leveraging on a HJB PDE approach together with a gradient descent step for the outer minimization. Our article can be seen as an alternative in which we explicitly do not reduce the dimensionality of our HJBI PDE.

\subsection*{Outline} The remainder of this paper is dedicated to the proofs of our main results.
In Section \ref{sec:existence}, we prove Theorem \ref{thm:existence}.
There we also show that the open-loop and Markovian problems have the same value.
In the last section we prove a comparison theorem leading to Theorem~\ref{thm:uniqueness}.

\section{Characterization and existence}
\label{sec:existence}
This section is dedicated to the proof of Theorem~\ref{thm:existence}.
It will be split into several intermediate results.
Theorem~\ref{thm:uniqueness} is proved in the subsequent section.
For completeness, we recall the notion of viscosity solution we use in Theorem \ref{thm:existence}.
Here and in the rest of the paper, we denote by $\underline F$ and $\overline F$ the lower semicontinuous envelope and the upper semicontinuous envelope of $F$, respectively.
\begin{definition}\label{def viscosity}
Let $F:[0,T]\times\dpeyzspace\times\R\times\R\times  \R^\ydim\times\R^{(\ydim+1)\times (\ydim+1)}\to \R$ be a given function.
  An upper semicontinuous function $V:[0,T]\times  \closure{\dpeyzspace} \to \R$ is said to be a viscosity subsolution of the PDE
\begin{equation}
\label{eq:pde def}
\begin{cases}
    F(t,y,z,V, \partial_t V, \partial_y V, D^2 V) = 0&  \text{in } \dpeyzspace_T  \\
    V(T, y,z) = \psi(y,z) = zf(y)-l^*(z)  & (y,z)\in \closure{\dpeyzspace},\\
         V(t,y,z) = z\phi(t,y) - l^*(z)  & (t,y,z) \in  [0,T]\times \partial \dpeyzspace
    \end{cases}
\end{equation}
   if for all $x_0=(s_0,y_0,z_0)\in [0,T]\times\dpeyzspace$ and $\varphi \in C^2([0,T]\times\dpeyzspace)$ such that $x_0$ is a local maximizer of $V-\varphi$ and $\varphi(x_0) = V(x_0)$, if $s_0=T$ we have $V(x_0) \le \psi(y_0,z_0)$; if  $(y_0,z_0) \in\partial \dpeyzspace$ we have
  \[
      V(x_0) \le z_0\phi(s_0,y_0) - l^*(z_0)
  \]
  and otherwise
  \begin{equation*}
    \underline F(x_0,V(x_0),\partial_t\varphi(x_0), \partial_y\varphi(x_0), D^2\varphi(x_0)) \le 0.
  \end{equation*}

  A lower semicontinuous function $V$ is said to be a viscosity supersolution of \eqref{eq:pde def}
   if for all $x_0=(s_0,y_0,z_0)\in [0,T]\times\dpeyzspace$ and $\varphi \in C^2([0,T]\times\dpeyzspace)$ such that $x_0$ is a local minimizer of $v -\varphi$ and $\varphi(x_0) = V(x_0)$, if $s_0=T$ we have $v(x_0)\ge \psi(y_0,z_0)$,
   if   $(y_0,z_0) \in\partial \dpeyzspace$  we have
   \[
      V(x_0) \ge z_0\phi(s_0,y_0) - l^*(z_0)
   \]
   and otherwise
  \begin{equation*}
    \overline F(x_0,V(x_0),\partial_t\varphi(x_0), \partial_y\varphi(x_0), D^2\varphi(x_0))\ge 0.
  \end{equation*}

  A function is a viscosity solution if it is both a viscosity sub- and supersolution.
\end{definition}
It should be noted that, as shown in \cite[Lemma V.4.1]{Flem-Soner}, this definition of viscosity solutions is equivalent to the definition using sub- and superjets given in Definition \ref{eq:def jets} below.
For the equation studied here, i.e., with $F$ representing the left hand side of \eqref{eqn:dpe:int}, $F$ is already upper semicontinuous and the upper semicontinuous envelope can be omitted.
Moreover, $F$ is locally continuous around any point at which $\partial_{zz} \varphi < 0$.
Finally, for $\partial_{zz} \varphi \geq 0$, $\underline{F} \equiv - \infty$, and thus trivially satisfies the condition for subsolutions.
This is the relaxation needed for existence at the points of discontinuity $\partial_{zz} \varphi = 0$, with the burden instead shifted to the comparison proof.

Let ${\cal L}$ be the space defined by
\[
{\cal L}:= \left\{\beta:[0,T]\times \Omega\to \R^\BMdim, \text{progressively measurable and } \E\int_0^T|\beta_u|^2\,du<\infty\right\}.
\]
It is well-known (see e.g. \cite{ben-tal01}) that the functional $\rho$ admits the convex dual representation
\begin{equation*}
	\rho(X) = \sup_{Z \in L^1_+: \E[Z]=1}(\E[ZX - l^*(Z)]), \quad X \in L^\infty,
\end{equation*}
and that, by monotone convergence, the representation easily extends to random variables $X$ that are bounded from below.
Furthermore, in our Brownian filtration every random variable $Z \in L^1_+$ with $\E[Z]=1$ can be written as $Z=Z^{0,1,\beta}_T$, with
\begin{equation}
\label{eq:sde.Z}
	dZ^{s,z,\beta}_t = \beta_tZ^{s,z,\beta}_t\,dW_t, \quad Z^{s,z,\beta}_s = z, \quad \text{for some } \beta \in {\cal L}.
\end{equation}
Thus, by \eqref{eq:rob rep RM}, the value function associated to the control problem \probP{} is given by
\begin{equation}
\label{eq:value}
V(s,y,z)= \inf_{\alpha \in {\contrProcSet}}\sup_{\beta \in {\cal L}}\E{\left[ Z^{s,z,\beta}_Tf(Y^{s,y,\alpha}_T) - l^*(Z^{s,z,\alpha}_T) \right]},
\end{equation}
where $Y^{s,y,\alpha}$ denotes the solution of \eqref{eq:control.SDE} on $[s,T]$ starting from $Y_s=y$. 

The rest of the proof is devoted to showing that $V$ is a viscosity solution to \refDPE{}.
To that end, it shall be useful to restrict the optimization problem to the so-called ``Markov controls'', which we define as:
\begin{equation*}
	{\contrProcSet}^M:= \{\alpha:[0,T]\times \mathbb{R}^\ydim\to \contrValSet, \text{ Borel measurable} \},
\end{equation*}
or to the more relevant subset
\begin{equation*}
	{\contrProcSet}^{M,L}=\{\alpha:[0,T]\times \mathbb{R}^\ydim\to \contrValSet, \text{ Lipschitz continuous} \}.
\end{equation*}
The advantage of working with a control $\alpha \in {\contrProcSet}^{M,L}$ is that the associated state process $Y^\alpha$ is determined by coefficients satisfying the assumptions in \cite{OCE-PDE}. This opens the way to leverage on some of the results obtained in \cite{OCE-PDE} for the uncontrolled case.

We will also consider the subset ${\cal L}_b$ of ${\cal L}$ given by
\begin{equation*}
    {\cal L}_b:={\left\{\beta \in {\cal L}: \sup_{t\in [0,T]}|\beta_t|\in L^\infty\right\}}.
\end{equation*}

\begin{lemma}\label{lem_compactness_measurable}
    Suppose that {$(b,\ysigma)$ satisfy} \assb{}. {Take $\beta \in {\cal L}_b $ so that abusing notation, we  put $\beta=\beta(W)$ where $\beta$ is a progressive bounded process on the canonical space $C([0,T];\mathbb R^\ydim)$. 
    
    Given $I\subset [0,T],\Delta\subset \mathbb R^\ydim$ compact, we write $\Gamma$ for the set of the laws of all processes $Y^{s,y,\alpha,\beta}$ constructed on some filtered probablility space $(\tilde \Omega,\tilde{\mathcal F},\tilde  P)$ for which, for some $s\in I,y\in\Delta$, we have $Y^{s,y,\alpha,\beta}_t :=y$ if $t\leq s$ while for $t>s$:
 \begin{align*}dY^{s,y,\alpha,\beta}_t&= [b(t,Y^{s,y,\alpha,\beta}_t,\alpha_t)+\sigma\sigma^\top(t,Y^{s,y,\alpha,\beta}_t,\alpha_t)\beta_t(\tilde W)]dt+\sigma(t,Y^{s,y,\alpha,\beta}_t,\alpha_t) d\tilde W_t,
 \end{align*} 
and where $\alpha$ is some $\tilde{\mathcal F}$-progressive and $\contrValSet$-valued process while $\tilde W$ is some $(\tilde{\mathcal F},\tilde P)$-Brownian motion.
Then, for any $\kappa\geq 1$, the set $\Gamma$ is compact in the $\mathcal W_\kappa$-topology\footnote{{$P_n\to P$ in this topology if and only if $\int FdP_n\to \int FdP$ for any $F\in C(C([0,T];\mathbb R^\ydim))$ with $\sup_{\omega\in C([0,T];\mathbb R^\ydim)} \frac{|F(\omega)|}{1+\sup_{t\in[0,T]}|\omega_t|^\kappa}<\infty$. This convergence is metrized by the so-called $\kappa$-Wasserstein (hence $\mathcal W_\kappa$) distance on the space of probability measures on $C([0,T];\mathbb R^\ydim)$ for which the function $\omega\mapsto\sup_{t\in[0,T]}|\omega_t|^\kappa$ is integrable. A set $\Gamma$ is relatively compact in this topology if and only if it is tight and $\lim_{N\to\infty}\sup_{P\in\Gamma}\int_{\sup_{t\in[0,T]}|\omega_t|\geq N}\sup_{t\in[0,T]}|\omega_t|^\kappa dP(\omega)=0$. }}.}
\end{lemma}

\begin{proof}
The drift and volatility terms of $Y^{s,y,\alpha,\beta}_t$ are bounded, as well as the initial conditions $(s,y)$. It follows from e.g.\ \cite{zheng1985} that $\Gamma$ is relatively compact with respect to the weak topology induced by continuous bounded functions on $C([0,T];\mathbb R^\ydim)$. Since $(b,\sigma)$ are bounded it follows from the classical BDG inequalities that $\tilde\E[\sup_{t\leq T} |Y^{s,y,\alpha,\beta}_t|^{1+\kappa}]\leq c$ for some constant $c$ uniformly in $\Gamma$. From this it easily follows that $\Gamma$ is also relatively compact with respect to the $\mathcal W_\kappa$-topology.

To finish the proof it suffices to show that $\Gamma$ is weakly closed. Take $\{s^n,y^n\}\subset  I\times \Delta $ and $(\alpha^n)$ progressive and $\contrValSet$-valued possibly in different stochastic bases $(\tilde \Omega^n,\tilde{\mathcal F}^n,\tilde  P^n)$ respectively with a Brownian motion $\tilde W^n$,  and suppose $\text{Law}(Y^{s^n,y^n,\alpha^n,\beta})\to \mathbb Q$. By selecting a subsequence we may suppose $s^n\to s\in I$ and $y^n\to y\in \Delta$. We introduce $$\bar \Omega = C([0,T];\mathbb R^\ydim) \times C([0,T];\mathbb R^\ydim) \times  \mathcal P([0,T]\times  \contrValSet),$$ with generic elements denoted $\bar\omega=(\omega^1,\omega^2,q)$. The space $C([0,T];\mathbb R^\ydim)$ is equipped with its canonical filtration, denoted $(\mathcal F^1_t)_t$, and the space  $\mathcal P([0,T]\times \contrValSet)$ is equipped with the filtration $(\mathcal F^2_t)_t$ where $\mathcal F^2_t$ is the sigma-algebra generated by the sets $\{q(J\times G):J\subset [0,t], G\subset \contrValSet \text{ measurable}\}$, so that $\bar\Omega$ is equipped with the product filtration $\bar{\mathcal{F}}_t=\mathcal F^1_t\times\mathcal F^1_t\times \mathcal F^2_t$. We embedd $\text{Law}(Y^{s^n,y^n,\alpha^n,\beta})$ into $\mathcal P(\bar\Omega)$ by considering $Q^n:=\text{Law}(Y^{s^n,y^n,\alpha^n,\beta},\tilde W^n,A^n)$ where $A^n=1_{[0,T]}dt\delta_{\alpha^n_t}$. As the space $\mathcal P([0,T]\times \contrValSet)$  is compact, and since the second marginal of $Q^n$ is fixed, up to taking a further subsequence we may assume $Q^n\to Q$ weakly. Necessarily the first marginal of $Q$ is equal to $\mathbb Q$, the process $\omega^2$ is a $(\bar{\mathcal F},Q)$-Brownian motion, and $Q-$a.s.\ the first marginal of $q$ is Lebesgue measure on $[0,T]$.

In terms of the martingale problem, we have for all $t,a\geq 0$, all continuous bounded functions $h:\bar\Omega\to\mathbb R$ which are $\bar{\mathcal F}_t$-measurable, and all $\phi\in C(\mathbb R^\ydim)$ twice-continuously differentiable with bounded derivatives, that
\begin{align}
\label{eq_marting_pb_n}
\int h(\bar\omega)[M^{n,\phi}_{t+a}-M^{n,\phi}_{t}]dQ^n=0,
\end{align}
where $M^{n,\phi}_{t}=M^{n,\phi}_{t}(\bar\omega)$ is defined by 
\begin{align*}
  \phi(\omega^1_{s^n\vee t}) &- \int\limits_{[s^n,s^n\vee t]\times \contrValSet}\bigg\{\sum_i  b_i(r,\omega^1_t,a)\partial_i\phi(\omega^1_t)\\
  &\qquad +\sum_{i,j}(\sigma\sigma^\top(r,\omega^1_t,a))_{ij} \left[ (\beta_r(\omega^2))_j\partial_i\phi(\omega^1_t)+ \frac{1}{2}  \partial^2_{i,j}\phi(\omega^1_t) \right ] \bigg\}q(dr,da).
\end{align*}
Clearly $M^{n,\phi}_{t}$ converges uniformly to $M^\phi_t$ defined by
\begin{align*}
  \phi(\omega^1_{s\vee t}) &- \int\limits_{[s,s\vee t]\times \contrValSet}\bigg\{\sum_i  b_i(r,\omega^1_t,a)\partial_i\phi(\omega^1_t)\\
  &\qquad+\sum_{i,j} (\sigma\sigma^\top(r,\omega^1_t,a))_{ij}  \left[ (\beta_r(\omega^2))_j\partial_i\phi(\omega^1_t)+ \frac{1}{2} \partial^2_{i,j}\phi(\omega^1_t)\right ]  \bigg\}q(dr,da),
\end{align*}
the latter being jointly measurable in $\bar\omega$ and bounded. On the other hand, as a function of $(\omega^1,q)$ the same function is $Q$-almost continuous. By the last statement we mean that the measure $Q$ gives mass 1 to those $q$ for which the first marginal is Lebesgue, and these are in particular continuity points for the term  $(\omega^1,q)\mapsto \int\{\dots\}dq$ above. Finally, since the $\omega^2$-marginal is fixed, a standard Lusin argument and \eqref{eq_marting_pb_n} allow as to conclude that
\begin{align}
\label{eq_marting_pb}
\int h(\bar\omega)[M^{\phi}_{t+a}-M^{\phi}_{t}]dQ=0.
\end{align}

To finalize the proof, recall the convexity assumtion in \assb{} and the set 
$K(t,y)$ defined therein, which is not only convex but also compact by continuity of $(b,\sigma)$ and compactness of $\contrValSet$. After disintegration, we hence observe that $Q-$a.s:
$$\int_\contrValSet(b_i(r,\omega^1_r,a),\sigma\sigma^\top(r,\omega^1_r,a))q_r(da)\in K(r,\omega^1_r). $$ 
Hence a measurable selection argument as in \cite[Lemma 5]{be71} provides the existence of $\alpha$, some $\bar{\mathcal F}$-progressive and $\contrValSet$-valued process, such that 
$$\int_\contrValSet(b_i(r,\omega^1_r,a),\sigma\sigma^\top(r,\omega^1_r,a))q_r(da)= (b_i(r,\omega^1_r,\alpha_r),\sigma\sigma^\top(r,\omega^1_r,\alpha_r)).$$
Observing that $\phi(\omega^1_{s\vee t})-M^\phi_t$ can be then written as
$$ \int_s^{s\vee t}\left\{\sum_i  b_i(r,\omega^1_t,\alpha_r)\partial_i\phi(\omega^1_t)+\sum_{i,j} (\sigma\sigma^\top(r,\omega^1_t,\alpha_r))_{ij}  \left[ (\beta_r(\omega^2))_j\partial_i\phi(\omega^1_t)+ \frac{1}{2} \partial^2_{i,j}\phi(\omega^1_t)\right ]  \right\}dr,$$
we derive from \eqref{eq_marting_pb} and the correspondence between weak solutions and martingale problems, that the process $\omega^1$, which we may relabel $Y^{s,y,\alpha,\beta}$, is under $(\bar\Omega,\bar{\mathcal F},\bar P)$ just as stated in this lemma and has law $\mathbb Q$ as desired.
\end{proof}

\begin{remark}
   Note that the existence of optimal control in a weak sense for the control problem \probP{} follows as a direct consequence of the above Lemma \ref{lem_compactness_measurable}.
\end{remark}

\begin{lemma}
\label{lem:open-to-markov}
If assumptions \assb{}, \assf{}, and \assl{} are satisfied, then for every strategy $\alpha\in {\contrProcSet}$, there is a strategy $\tilde{\alpha} \in {\contrProcSet}^M$ such that $\rho(f(Y^\alpha_T)) = \rho(f(Y_T^{\tilde{\alpha}}))$, where the process $Y^{\tilde \alpha}$ is built on a possibly different probability space as in Lemma \ref{lem_compactness_measurable}.
\end{lemma}
\begin{proof}
    Given $\alpha\in \mathcal{A}$,
    We may apply \cite[Corollary 3.7]{Shrevemarginals}, which extends the original \cite[Theorem 4.6]{gyongy} to obtain the existence of {functions $\tilde b,\tilde \sigma$ such that the equation
	\begin{equation*}
		d\tilde Y_t=\tilde{b}(t,\tilde Y_t)dt+\tilde\ysigma(t,\tilde Y_t) d\tilde W_t
	\end{equation*}
	admits a weak solution (i.e.\ on some probability space with some Brownian motion $\tilde W$) with one-dimensional marginals that coincide with those of $Y^{\alpha}$.
	In particular, $\tilde{b}(t,y) := \E[b(t, Y^{\alpha}_t, \alpha_t)|Y^{\alpha}_t=y]$ and $\tilde{\sigma}\tilde{\sigma}^\top(t,y) := \E[\sigma\sigma^\top(t, Y^{\alpha}_t, \alpha_t)|Y^{\alpha}_t=y]$, so that $(\tilde b(t, y),\tilde{\sigma}\tilde{\sigma}^\top(t,y) )$ belongs to the convex compact set $K(t,y)$ introduced in Assumption \assb{}.
  Therefore, a measurable selection argument as in \cite[Lemma 5]{be71} allows to find a Borel measurable function $\tilde \alpha:[0,T]\times \mathbb{R}^\ydim\to \mathbb{A}$ such that $\tilde b(t,y) = b(t,y,\tilde \alpha(t,y))$ and $\tilde \sigma\tilde \sigma^\top(t,y) = \tilde \sigma\tilde \sigma^\top(t,y,\tilde \alpha(t,y))$.
	Thus, $\tilde \alpha \in \mathcal{A}^{M}$ and $\tilde Y = Y^{\tilde \alpha}$ with
	\begin{equation}
	\label{eq:weaksol}
		dY^{\tilde \alpha}_t = b(t,Y^{\tilde \alpha}_t, \tilde \alpha(t, Y^{\tilde\alpha}_t))\,dt +  \sigma(t,Y^{\tilde \alpha}_t, \tilde \alpha(t, Y^{\tilde\alpha}_t))\,d\tilde W_t.
	\end{equation}
	}
    Since OCE risk measures are law invariant, we conclude that $\rho(f(Y^\alpha_T)) = \rho(f(Y^{\tilde{\alpha}}_T))$.
\end{proof}

{
\begin{lemma}
\label{lem:conv.lemma}
Assume \eqref{eq:ass.b}, that $\sigma$ is uncontrolled and non-degenerate, that $\alpha$ a measurable $\contrValSet$-valued Markov control, and build $Y^\alpha$ on some probability space $(\tilde \Omega,\tilde{ \mathcal{F}},\tilde P )$. Then there is a sequence $\alpha_n$ of Lipschitz $\contrValSet$-valued Markov controls, such that
$$\tilde \E[F(Y^{y,s,\alpha_n})]\to\tilde \E[F(Y^{y,s,\alpha})], $$
for all $F:C([s,T];\R^\ydim)\to\R$ measurable and with at most polynomial growth.
\end{lemma}

\begin{proof}
Without loss of generality, take $s=0$ and $y=0$.

{
\emph{Step 0: It is enough to settle the question up to exit times:}
\\
Suppose we have found  a sequence $\alpha_n$ of Lipschitz $\contrValSet$-valued Markov controls, such that
\begin{align}
\label{eq:lim_with_R}\tilde \E[F(Y^{0,0,\alpha_n,R})]\to\tilde \E[F(Y^{0,0,\alpha,R})], 
\end{align}
for all $R>0$ and all $F:C([0,T];\R^\ydim)\to\R$ measurable and with at most polynomial growth, whereby $Y^{0,0,\alpha_n,R}$ (resp.\ $Y^{0,0,\alpha,R}$) denotes the process $Y^{0,0,\alpha_n}$ (resp.\ $Y^{0,0,\alpha}$) stopped at its first exit from the ball with center the origin and radius $R$, if this time is smaller than $T$. 

We remark that
$$
\lim_{R\nearrow \infty} \sup_{\bar\alpha\in \{\alpha_n\}_n\cup\{\alpha\}}\left |\tilde \E\left[1_{\{\|Y^{0,0,\bar \alpha}\|_{\infty}\geq R\}}\{F(Y^{0,0,\bar \alpha,R}) - F(Y^{0,0,\bar \alpha}) \} \right]\right |=0, 
$$
by the polynomial growth of $F$, the precompactness established in Lemma \ref{lem_compactness_measurable}, and the fact that, if $$\tau_R(\omega):=\inf\{t\in[0,T]:|\omega_t|\geq R\}\wedge T,$$ then for all continuous paths $\omega$ we have
$$1_{\{\sup_{t\in[0,T]}|\omega_{t\wedge \tau_R(\omega)}|\geq N\}}\sup_{t\in[0,T]}|\omega_{t\wedge \tau_R(\omega)}|^\kappa \leq 1_{\{\sup_{t\in[0,T]}|\omega_t|\geq N\}}\sup_{t\in[0,T]}|\omega_t|^\kappa .$$
From this remark, it is direct to drop the radius $R$ from  \eqref{eq:lim_with_R}.}\\

\emph{Step 1: Construction of the sequence $(\alpha_n)$:}
\\
Define $Y$ as the unique strong solution to $dY_t= \sigma(t,Y_t)d\tilde W_t$, denote $\mathbb Q:=\text{Law}(Y)$ and $ \mathbb Q_t$ the $t$-marginal of $\mathbb Q$. Call $\mu(dt,dx):=\mathbb Q_t(dx)dt$. We will build $\alpha_n$ as stipulated, such that
$\alpha_n\to\alpha$ in $L^2(\mu)$. By Lusin's theorem, for each $\varepsilon$ we find $F_\varepsilon\subset [0,T]\times\R^\ydim$ compact such that $\mu(F_\varepsilon)\geq 1-\varepsilon$ and $\alpha|_{F_\varepsilon}$ is continuous. By Tietze extension theorem, particularly in the vesion of \cite[Theorem 4.1]{dugundji1951}, we build $\alpha_\varepsilon$ as a continuous extension of $\alpha|_{F_\varepsilon}$, still $\contrValSet$-valued since $\contrValSet$ is closed and convex. Via mollification we take $\alpha_{\varepsilon,\eta}\to \alpha_\varepsilon$, locally uniformly as $\eta\to 0$, each of which is smooth and $\contrValSet$-valued, since $\contrValSet$ is convex. In particular  $\alpha_{\varepsilon,\eta}|_{F_\varepsilon}$ is Lipschitz and  $\alpha_{\varepsilon,\eta}|_{F_\epsilon}\to \alpha_\epsilon|_{F_\epsilon}=\alpha|_{F_\varepsilon}$ uniformly as $\eta\to 0$. We can now take by \cite[Theorem 1]{minty1970} $\bar{\alpha}_{\varepsilon,\eta}$ a Lipschitz extension of $\alpha_{\varepsilon,\eta}|_{F_\varepsilon}$ which is still $\contrValSet$-valued, since $\contrValSet$ is closed and convex. With the help of $\{\bar{\alpha}_{\varepsilon,\eta}\}_{\varepsilon,\eta}$ we can build a sequence $\{\alpha_n\}_n$ such that
\begin{itemize}
\item $\alpha_n$ is Lipschitz and $\contrValSet$-valued;
\item $\sup_{(t,x)\in F_{1/n}}|\alpha_n(t,x)-\alpha(t,x)| \leq 1/n$.
\end{itemize}
 From here it follows, since $\contrValSet$ is compact, that 
 $$\|\alpha_n-\alpha\|^2_{L^2(\mu)}\leq 1/n^2\times(1-1/n)+\frac{2}{n}\sup_{x\in \contrValSet}|x|^2,$$
 and so  $\alpha_n\to\alpha$ in $L^2(\mu)$. 
{We remark that by continuity, and under the non-degeneracy assumption on $\sigma$, the matrix $\sigma\sigma^\top$ is invertible and locally uniformly elliptic, the latter meaning that for all $t\in[0,T]$ and $|y|\leq R$ we have $\sigma(t,y)\sigma^\top(t,y)\geq \lambda_R I$ for some $\lambda_R>0$.} Hence the same holds for $\sigma$. 
{Thus for each $R>0$, and recalling the notation for $\tau_R$ from Step 0,}  it easily follows from these considerations, the Lipschitz property of $b$, and It\^o isometry, that
 \begin{align}
 \notag
 \lim_n &\int_0^{ \tau_R(Y)}\left\{ b(u,Y_u,\alpha_n(u,Y_u))d\tilde W_u -\frac{1}{2}| \sigma(u,Y_u)^{-1}  b(u,Y_u,\alpha_n(u,Y_u))|^2 du  \right\}\\
 \label{eq:conv.b.sig.b}
 = & \int_0^{\tau_R(Y)}\left\{ b(u,Y_u,\alpha(u,Y_u))d\tilde W_u -\frac{1}{2}| \sigma(u,Y_u)^{-1}  b(u,Y_u,\alpha(u,Y_u))|^2 du  \right\}
 \end{align}
 in $L^2$ and so, up to taking a subsequence, almost surely as well.\\

\emph{Step 2: Representation via Girsanov's transform:}
\\
{Consider the stochastic exponentials $Z^R_T:=\mathcal E(\int_0^{\tau_R(Y)} \sigma(t,Y_t)^{-1}b(t,Y_t,\alpha(t,Y_t))d\tilde W_t)$ and $Z^{n,R}_T:=\mathcal E(\int_0^{\tau_R(Y)} \sigma(t,Y_t)^{-1}b(t,Y_t,\alpha_n(t,Y_t))d\tilde W_t)$ for an arbitrary $R>0$ . By SDE estimates $\{Z_T^{n,R}\}_n$ is $L^p$-bounded for every $p\geq 2$. Selecting subsequences (thanks to Alaoglu's theorem) and a diagonalization argument gives that $Z_T^{n,R}\to Z^R$ in the weak topology of $L^p$ for all $p\ge 2$, but by \eqref{eq:conv.b.sig.b} we must have $Z^R=Z^R_T$. 
We conclude in particular that 
\begin{align}
 \label{eq:yet_another_convergence}   
 \tilde\E\left[ F(Y)Z^{n,R}_T \right ]\to \tilde\E\left[ F(Y)Z^R_T \right ],
\end{align}
}for $F$ measurable and such that $\tilde\E[|F(Y)|^q]<\infty$ for some $q\in (1,2]$. If $|F(\omega)|\leq c[1+\sup_t |\omega_t|^k]$ with $k\in\mathbb N$, then $\tilde\E[|F(Y)|^q]<\infty$ is guaranteed by the BDG inequality, as $\sigma$ is bounded. Finally, observe by Girsanov theorem that {$\tilde W_\cdot - \int_0^{\cdot\wedge \tau_R(Y)} \sigma(t,Y_t)^{-1}b(t,Y_t,\alpha(t,Y_t))dt$ (respectively $\tilde W_\cdot - \int_0^{\cdot\wedge \tau_R(Y)} \sigma(t,Y_t)^{-1}b(t,Y_t,\alpha_n(t,Y_t))dt$) is a $Z^R_T\tilde P$-Brownian motion (resp.\ a $Z_T^{n,R}\tilde P$-Brownian motion), and so {on $\{t\leq \tau_R(Y)\}$ we have that} $dY_t=b(t,Y_t,\alpha(t,Y_t))dt + \sigma(t,Y_t)dB_t $  for $B$ a $Z_T^R\tilde P$-Brownian motion (resp.\  $dY_t=b(t,Y_t,\alpha_n(t,Y_t))dt + \sigma(t,Y_t)dB^n_t $  for $B^n$ a $Z_T^{n,R}\tilde P$-Brownian motion). 
Recalling from Step 0 the notation that $R$ as a superscript means the path stopped at its exit time from the ball of said radius, it follows by uniqueness in law (see \cite[Theorem 5.6]{StVa69}) that  $\tilde\E\left[ F(Y^R)Z^n_T \right ]=\tilde\E\left[ F(Y^{0,0,\alpha_n,R}) \right ]$ and $\tilde\E\left[ F(Y^R)Z_T \right ]=\tilde\E\left[ F(Y^{0,0,\alpha,R}) \right ]$. This and \eqref{eq:yet_another_convergence}, the latter applied to $F^R(\omega):=F(\omega^R)$ to be precise, establish the validity of Step 0 and hence} conclude the proof.
\end{proof}
    }

The following proposition shows that the open-loop and the Markovian formulations of the control problem have the same value, and in fact the Markov controls can be chosen to be Lipschitz:
\begin{proposition}
\label{pro:open-markov}
    If assumptions \assb{}, \assf{} and \assl{} are satisfied, and $\sigma$ is uncontrolled and {non-degenerate}, then
	\begin{equation}
	\label{eq:borel-lips}
		V(s,y,z) = \inf_{\alpha \in {\contrProcSet}^{M,L}}\sup_{\beta \in {\cal L}_b}\E\left[f(Y^{s,y,\alpha}_T)Z^{s,z,\beta}_T - l^*(Z^{s,z,\beta}_T)\right] \quad \text{for all } (t,y,z) \in [0,T]\times \dpeyzspace.
	\end{equation}
\end{proposition}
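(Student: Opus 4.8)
The goal is to show the open-loop value function $V(s,y,z)$ defined in \eqref{eq:value} coincides with the value obtained when the infimum is restricted to Lipschitz Markov controls (and the supremum to bounded-intensity processes $\mathcal{L}_b$). The strategy is to prove a chain of inequalities. Since ${\contrProcSet}^{M,L}\subseteq {\contrProcSet}^M \subseteq {\contrProcSet}$ and ${\cal L}_b \subseteq {\cal L}$, one direction requires comparing an infimum over a larger set against one over a smaller set, and the other direction needs an approximation argument. Concretely, I would establish:
\begin{equation*}
  V(s,y,z) \;\overset{(a)}{\le}\; \inf_{\alpha \in {\contrProcSet}^{M,L}}\sup_{\beta \in {\cal L}_b}\E[\cdots]
  \;\overset{(b)}{\le}\; \inf_{\alpha \in {\contrProcSet}}\sup_{\beta \in {\cal L}_b}\E[\cdots]
  \;\overset{(c)}{\le}\; \inf_{\alpha \in {\contrProcSet}}\sup_{\beta \in {\cal L}}\E[\cdots] \;=\; V(s,y,z).
\end{equation*}
Wait --- this reasoning is backwards for (b), so let me reorganize. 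The correct chain is: restricting the infimum to a smaller control set can only increase it, so $\inf_{\alpha\in\contrProcSet}\le \inf_{\alpha\in\contrProcSet^{M,L}}$ for any fixed inner functional; and restricting the supremum to $\mathcal{L}_b$ can only decrease it. So I need (i) $V(s,y,z) \le \inf_{\alpha\in\contrProcSet^{M,L}}\sup_{\beta\in\mathcal{L}_b}\E[\cdots]$, which is delicate, and (ii) the reverse inequality $\inf_{\alpha\in\contrProcSet^{M,L}}\sup_{\beta\in\mathcal{L}_b}\E[\cdots] \le V(s,y,z)$.

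\textbf{Direction (ii): reducing to Lipschitz Markov controls.} By Lemma~\ref{lem:open-to-markov}, any open-loop $\alpha$ can be replaced by a Borel Markov control $\tilde\alpha\in\contrProcSet^M$ with the same law of $Y_T$, hence the same value of $\rho(f(Y^\alpha_T))$; combined with the dual representation this already gives $\inf_{\alpha\in\contrProcSet^M}\sup_{\beta\in\mathcal{L}}\E[\cdots] = V(s,y,z)$. The remaining work is to pass from $\contrProcSet^M$ to $\contrProcSet^{M,L}$: given a Borel Markov control, I would mollify it (convolution with a smooth kernel in $(t,y)$) to obtain Lipschitz controls $\alpha^n\to\alpha$ a.e.; since the drift coefficient $b(t,y,\cdot)$ has the convexity/independence structure in \assb{}, the drifts $b(t,y,\alpha^n(t,y))$ stay bounded and the laws of $Y^{s,y,\alpha^n}$ converge (via Lemma~\ref{lem_compactness_measurable}'s compactness machinery, or directly by stability of SDEs), so $\sup_{\beta\in\mathcal{L}_b}\E[\cdots]$ passes to the limit by continuity/boundedness of $f$ and lower semicontinuity of $l^*$. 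Also here one should argue $\sup_{\beta\in\mathcal{L}}$ can be replaced by $\sup_{\beta\in\mathcal{L}_b}$ inside, by truncating $\beta$ and using monotone/dominated convergence on $\E[Z^\beta_T f(Y_T) - l^*(Z^\beta_T)]$ --- the hard part being to control $\E[l^*(Z^\beta_T)]$ under truncation, which is where boundedness of $f$ and the structure of $l^*$ help.

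\textbf{Direction (i): the other inequality, and the main obstacle.} This is where the real difficulty lies. We need that restricting to \emph{Lipschitz} Markov controls does not lose value, i.e. the infimum over $\contrProcSet^{M,L}$ is no larger than the infimum over all of $\contrProcSet$. This is not automatic since $\contrProcSet^{M,L}$ is a strictly smaller set. The key is that for a \emph{fixed} adversarial $\beta\in\mathcal{L}_b$, the inner problem $\inf_{\alpha}\E[Z^{s,z,\beta}_T f(Y^{s,y,\alpha}_T) - l^*(Z^{s,z,\beta}_T)]$ is, after a change of measure absorbing $\beta$ into the drift (making the drift $b+\sigma\sigma'\beta$), a standard risk-neutral control problem whose value is attained and is the same over open-loop, Borel-Markov, and Lipschitz-Markov controls --- this is classical (e.g.\ via the dynamic programming equation for $\phi$-type problems, or via the chattering/mimicking results already invoked plus mollification). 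The subtlety is the interchange of $\inf_\alpha$ and $\sup_\beta$: we want $\inf_{\alpha\in\contrProcSet^{M,L}}\sup_\beta \le \sup_\beta\inf_{\alpha\in\contrProcSet}$ is false in general (minimax goes the wrong way), so instead I would argue directly: take a near-optimal open-loop $\alpha^\varepsilon$ for the full problem, apply Lemma~\ref{lem:open-to-markov} to get $\tilde\alpha^\varepsilon\in\contrProcSet^M$ with the \emph{same law of $Y_T$ for every fixed $\beta$} --- crucially the mimicking must preserve the joint law with $W$ enough to keep $\sup_\beta$ unchanged, which follows because $Z^{s,z,\beta}$ depends on $\beta$ and $W$ only and the dual value depends on the \emph{joint} law of $(Y_T, (W_t))$; here one must be careful that Gyöngy--Brunick mimicking preserves marginal laws of $Y_t$ but not joint laws with $W$. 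The cleanest fix, and the one I expect the paper uses, is to instead fix $\beta$, absorb it into the drift, invoke the classical equality of open-loop and Lipschitz-Markov values for the resulting \emph{standard} control problem, and then take $\sup$ over $\beta\in\mathcal{L}_b$ on both sides using that the equality holds for each $\beta$ with a uniform modulus. I expect the main obstacle to be precisely this interchange: justifying that one may first reduce to Markov controls \emph{for each adversary simultaneously}, which requires either a uniform-in-$\beta$ mimicking statement or a measurable-selection/relaxed-control argument showing the minimax value is unchanged; handling the unbounded $l^*$ (when $\dom l^*$ is not compact) adds a truncation layer on top of this.
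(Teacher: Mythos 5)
Your ``Direction (ii)'' is essentially the paper's proof of the only non-trivial inequality, and your ``Direction (i)'' then talks you back out of it. The resolution of the worry you raise there --- that Gy\"ongy-type mimicking preserves only the one-dimensional marginals of $Y$ and not its joint law with $W$ --- is a fact you have already stated yourself: by \cite[Propositions 2.7 and 2.8]{OCE-PDE}, the inner supremum $\sup_{\beta\in{\cal L}_b}\E[Z^{s,z,\beta}_Tf(Y_T)-l^*(Z^{s,z,\beta}_T)]$ equals $\rho^{l_z}(zf(Y_T))=\inf_{r}(\E[l(f(Y_T)-r)]+zr)$, a \emph{law-invariant} functional of $Y_T$ alone (in the Brownian filtration the $Z^{s,z,\beta}_T$ exhaust all admissible densities, so the supremum over $\beta$ is the full dual representation of an OCE). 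Hence preserving the marginal law of $Y_T$ is exactly enough, no uniform-in-$\beta$ mimicking statement is needed, and no minimax interchange ever occurs: one picks an $\varepsilon$-optimal open-loop $\alpha$, mimics it by $\tilde\alpha\in\contrProcSet^M$, and the entire inner supremum is unchanged. Your proposed ``cleanest fix'' (fix $\beta$, absorb it into the drift, equate open-loop and Lipschitz--Markov values of the resulting standard problem, then take $\sup_\beta$) is precisely the route that fails, for the reason you yourself give: it controls $\sup_\beta\inf_\alpha$, not $\inf_\alpha\sup_\beta$. The remaining inequality $V\le\inf_{\contrProcSet^{M,L}}\sup_{{\cal L}_b}$ is immediate once $\sup_{{\cal L}_b}=\sup_{{\cal L}}$ is known; the paper cites this from \cite{OCE-PDE} rather than re-proving it by truncation. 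One technical point in your mollification step also deserves attention: the Lipschitz approximations $\alpha^n$ converge to the Borel control $\tilde\alpha$ only Lebesgue-a.e.\ in $(t,y)$, and converting this into $L^2$-convergence of $Y^{s,y,\alpha^n}_T$ requires transferring Lebesgue-null sets to the law of the controlled diffusion; the paper does this with a stopping-time localization and Krylov's inequality, which generic ``stability of SDEs'' does not supply since $\tilde\alpha$ is merely measurable.
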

\begin{proof}
	It was shown in \cite[Proposition 3.2]{OCE-PDE} that if $f$ is bounded and $l$ satisfies \assl{}, then
	\begin{equation*}
	\label{eq:V_sup_b}
		\rho(f(Y^{s,y,\alpha}_T)) = \sup_{\beta \in {\cal L}_b}\E\left[f(Y^{s,y,\alpha}_T)Z^{s,z,\beta}_T - l^*(Z^{s,z,\beta}_T)\right]
	\end{equation*}
	for every $s,y,\alpha$.
    Since, $f(Y^{s,y,\alpha}_T) = \lim_{n\to\infty}f(Y^{s,y,\alpha}_T)\wedge n$, it follows by monotone convergence that the above holds for $f$ bounded from below.
    Thus,
	\begin{equation}
        V(s,y,z) = \inf_{\alpha \in {\contrProcSet}}\sup_{\beta \in {\cal L}_b}\E{\left[f(Y^{s,y,\alpha}_T)Z^{s,z,\beta}_T - l^*(Z^{s,z,\beta}_T)\right]},
	\end{equation}
	from which it follows that $V$ is smaller than the r.h.s.\ in \eqref{eq:borel-lips}.

	Let us prove the reverse inequality.
	For every $\varepsilon>0$, there is $\alpha \in {\contrProcSet}$ such that
	\begin{equation*}
		V(s,y,z) \ge \sup_{\beta \in {\cal L}_b}\E\left[(f(Y^{s,y,\alpha}_T))Z^{s,z,\beta}_T - l^*(Z^{s,z,\beta}_T)\right] -\varepsilon = \rho^{l_z}(zf(Y^{s,y,\alpha}_T)) - \varepsilon,
	\end{equation*}
	where the equality follows from \cite[Proposition 2.8]{OCE-PDE} and $\rho^{l_z}$
	is the OCE corresponding to the loss function $l_z(x):= l(x/z)$.
	By Lemma \ref{lem:open-to-markov}, there is $\tilde{\alpha} \in {\contrProcSet}^M$ such that $\rho^{l_z}(zf(Y^{s,y,\alpha}_T)) = \rho^{l_z}(zf(Y^{s,y,\tilde{\alpha}}_T))$. 
    Remark that $Y^{s,y,\tilde{\alpha}}$ is constructed on some (possibly different) stochastic basis $(\tilde \Omega,\tilde{\mathcal F},\tilde P)$ and Brownian motion $\tilde W$. 
    Furthermore, by Lemma \ref{lem:conv.lemma}, there is a sequence $\alpha_n$ of $\mathbb{A}$--valued Markov Lipschitz controls such that $\tilde\E[F(Y^{s,y,\alpha_n})] \to \tilde\E[F(Y^{y,s,\alpha})]$ for every measurable, real--valued function $F$ on $C([s,T],\mathbb{R}^d)$ with at most polynomial growth.
	Using again \cite[Proposition 2.8]{OCE-PDE} and the fact that $\rho^{l_z}$ is law--invariant, there is $r \in \R$ such that denoting by $\mu_{Y^{s,y, \tilde \alpha}_T}$ the law of $Y^{s,y, \tilde \alpha}_T$, we have		
    \begin{align*}
		V(s,y,z) &\ge \rho^{l_z}(zf(Y^{s,y, \tilde \alpha}_T)) -\varepsilon \ge \int l(f(x)-r)\mu_{Y^{s,y, \tilde \alpha}_T}(dx) +zr-2\varepsilon\\
        &= \tilde\E[l(f(Y^{s,y,\tilde\alpha}_T) - r)] + zr - 2\varepsilon.
    \end{align*}
    Therefore, letting $F(\omega) = l(f(\omega_T) - r)$, which is continuous and with at most polynomial growth under the assumptions on $l$ and $f$, we have
    \begin{align*}
		V(s,y,z)&\ge \lim_{n\to \infty}\tilde\E[l(f(Y^{s,y,\alpha^n}_T)-r)] +zr - 2\varepsilon \\
            &\ge \inf_{\alpha \in {\contrProcSet}^{M,L}}\inf_{r \in \R}(\tilde\E[l(f(Y^{s,y, \alpha}_T)-r)] +zr) - 2\varepsilon\\
            &=\inf_{\alpha \in  {\contrProcSet}^{M,L}}\sup_{\beta \in {\cal L}_b} \E{\left[f(Y^{s,y,\alpha}_T)Z^{s,z,\beta}_T - l^*(Z^{s,z,\beta}_T)\right]} - 2\varepsilon.
	\end{align*}
	Dropping $\varepsilon$ yields the desired result.
\end{proof}

\begin{lemma}
\label{lem:V-continuous}
    If assumptions \assb{}, \assf{} and \assl{} are satisfied, then the function $V$ is real-valued and continuous on $[0,T]\times \dpeyzspace$.
    Moreover, it holds that
    \begin{equation}
    \label{eq:boundary con}
      V(s,y,z) = z\phi(s,y) - l^*(z)\quad \text{for all}\quad (t,y,z) \in [0,T]\times\partial\dpeyzspace .
     \end{equation}
     If the domain of $l^*$ is closed, then $V$ is continuous on $[0,T]\times \closure{\dpeyzspace}$.
\end{lemma}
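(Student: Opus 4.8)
Write $M := \|f\|_\infty$ and work with the primal form $V(s,y,z) = \inf_{\alpha\in\contrProcSet,\,r\in\R}(\E[l(f(Y^{s,y,\alpha}_T)-r)]+rz)$ from \eqref{eq_V_ext}; recall $\phi(t,y) = \inf_\alpha\E[f(Y^{t,y,\alpha}_T)]$ from \probRF{}, which obeys $|\phi|\le M$ and is continuous by the SDE estimates used below. First I would establish real‑valuedness and reduce the inner infimum to a compact interval. Fenchel--Young at the relevant $z\in\zspacegen=\intdom(l^*)$ gives $l(f(Y_T)-r)\ge z(f(Y_T)-r)-l^*(z)$, hence $\E[l(f(Y^{s,y,\alpha}_T)-r)]+rz \ge z\,\E[f(Y^{s,y,\alpha}_T)]-l^*(z) \ge z\phi(s,y)-l^*(z) > -\infty$; in particular $V(s,y,z)\ge z\phi(s,y)-l^*(z)$ everywhere, an estimate I will reuse. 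Taking $r=0$ gives $V\le l(M)<\infty$. For the reduction, note that $l$ is increasing with asymptotic slope $\lim_{u\to\infty}l(u)/u=\sup\dom l^*$ and that every $z_0\in\zspacegen$ satisfies $0<z_0<\sup\dom l^*$; this yields $r_-<0<r_+$, depending only on $M,l,z_0$ and a small $\eta>0$, such that $\E[l(f(Y_T)-r)]+rz > l(M) \ge \E[l(f(Y_T))]$ for every $\alpha$, every $z\in[z_0-\eta,z_0+\eta]\subset\zspacegen$, and every $r\notin[r_-,r_+]$; thus on this neighbourhood the $r$‑infimum runs over the compact set $[r_-,r_+]$.

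For continuity on the open set I would use a uniform flow estimate: since $\sigma$ is constant and $b$ is Lipschitz in $(t,y)$ uniformly in $a$ with $|b|\le c_1(1+\max_{a\in\contrValSet}|a|)$ by \assb{}, Gronwall gives $\sup_{\alpha\in\contrProcSet}\E|Y^{s,y,\alpha}_T-Y^{s',y',\alpha}_T|^2 \le C(|s-s'|+|s-s'|^2+|y-y'|^2)$, and the uniform drift bound gives $\sup_\alpha\E|Y^{s,y,\alpha}_T|^p<\infty$ for every $p$, so the family of laws is tight (locally uniformly in $(s,y)$). Combining tightness with the boundedness and continuity of $f$ (approximate $f$ by a Lipschitz function on a large ball and bound the tail by tightness) produces a modulus $\omega$, independent of $\alpha$, with $\sup_\alpha\E|f(Y^{s,y,\alpha}_T)-f(Y^{s',y',\alpha}_T)|\le\omega$. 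Since $l$ is locally Lipschitz, with some constant $L_l$ on $[-M-r_+,\,M-r_-]$, on the above neighbourhood
\[
|V(s,y,z)-V(s',y',z')| \le \sup_{\alpha,\,r\in[r_-,r_+]}\Big(L_l\,\E|f(Y^{s,y,\alpha}_T)-f(Y^{s',y',\alpha}_T)|+|r|\,|z-z'|\Big) \le L_l\,\omega+\max(|r_-|,r_+)\,|z-z'|,
\]
giving joint continuity of $V$ on $[0,T]\times\zspacegen$.

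Next, the boundary identity \eqref{eq:boundary con}: recall $\partial\zspacegen\subseteq\{0,\sup\dom l^*\}$. At $z=0$, $r\mapsto l(x-r)$ is non‑increasing with $l(x-r)\downarrow\inf l=-l^*(0)$ as $r\to\infty$, so monotone convergence gives $\inf_r\E[l(f(Y^{s,y,\alpha}_T)-r)]=-l^*(0)$ \emph{independently of $\alpha$}, whence $V(s,y,0)=-l^*(0)=0\cdot\phi(s,y)-l^*(0)$. At $z=c:=\sup\dom l^*<\infty$, set $\psi(x):=l(x)-cx$, which is non‑increasing with $\psi(x)\downarrow-l^*(c)$ as $x\to\infty$; the identity $\E[l(f(Y_T)-r)]+rc=\E[\psi(f(Y_T)-r)]+c\,\E[f(Y_T)]$ combined with monotone convergence as $r\to-\infty$ gives $\inf_r(\E[l(f(Y^{s,y,\alpha}_T)-r)]+rc)\le -l^*(c)+c\,\E[f(Y^{s,y,\alpha}_T)]$; minimising over $\alpha$ yields $V(s,y,c)\le c\phi(s,y)-l^*(c)$, while the reverse inequality is the global lower bound from the first step. (If $l^*(c)=\infty$ the same computation gives $V(s,y,c)=-\infty$, consistent with the usual conventions.)

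Finally, assume $\dom l^*$ is closed. A finite convex function on a closed interval is continuous, so $l^*$ is continuous on $\closure{\zspacegen}$, and together with continuity and boundedness of $\phi$ the data $(s,y,z)\mapsto z\phi(s,y)-l^*(z)$ is continuous on $[0,T]\times\partial\dpeyzspace$. For $(s_n,y_n,z_n)\to(s_0,y_0,z_0)$ with $z_0\in\partial\zspacegen$, the global bound $V\ge z\phi-l^*$ gives $\liminf_n V(s_n,y_n,z_n)\ge z_0\phi(s_0,y_0)-l^*(z_0)$; for the matching $\limsup$, pick $\alpha_n$ with $\E[f(Y^{s_n,y_n,\alpha_n}_T)]\le\phi(s_n,y_n)+\varepsilon$ and a \emph{fixed} $r=r(\delta)$ as in the boundary analysis ($r$ large positive if $z_0=0$, large negative if $z_0=c$), so that $V(s_n,y_n,z_n)\le z_0\phi(s_0,y_0)-l^*(z_0)+\delta+c\varepsilon+o(1)$, the $o(1)$ absorbing $|r(\delta)|\,|c-z_n|$ and $c|\phi(s_n,y_n)-\phi(s_0,y_0)|$; sending $n\to\infty$ and then $\delta,\varepsilon\to0$ closes the argument, so $V$ is continuous on $[0,T]\times\closure{\dpeyzspace}$ with boundary values $z\phi(s,y)-l^*(z)$. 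I expect the main obstacle to be the interior continuity estimate: since $f$ is merely bounded and continuous (not Lipschitz), turning the $L^2$ flow estimate into a modulus of continuity for $\E|f(Y^{s,y,\alpha}_T)-f(Y^{s',y',\alpha}_T)|$ that is uniform over \emph{all} open‑loop controls forces one to exploit the tightness coming from the uniform drift bound; the remaining steps are bookkeeping with Fenchel--Young duality and monotone convergence.
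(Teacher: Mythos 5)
Your proof is correct, but it follows a genuinely different route from the paper's. The paper splits continuity on $[0,T]\times\dpeyzspace$ into upper semicontinuity (easy: $V$ is an infimum over $(\alpha,r)$ of functions that are continuous by $L^2$-stability of the flow and dominated convergence) and lower semicontinuity, the latter proved by contradiction using the dual representation together with the $\tau$-compactness of the set of laws $\{\mathrm{Law}(Y^{s,y,\alpha,\beta})\}$ from Lemma \ref{lem_compactness_measurable} and Girsanov's theorem; the boundary identity \eqref{eq:boundary con} is likewise derived from the dual form by showing that any nonzero $\beta$ pushes $Z_T$ out of $\dom(l^*)$ with positive probability, so only $\beta=0$ contributes. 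You instead work entirely with the primal form: you localize the $r$-infimum to a compact interval $[r_-,r_+]$ (using the asymptotic slope $\sup\dom l^*$ and $z_0\in\intdom(l^*)$), establish a modulus of continuity for $(s,y)\mapsto\E[l(f(Y^{s,y,\alpha}_T)-r)]$ that is \emph{uniform in $\alpha$ and $r\in[r_-,r_+]$} via flow stability, tightness, and local uniform continuity of $f$, and thereby get two-sided continuity in one stroke; the boundary values come from an explicit monotone-convergence evaluation of the $r$-infimum at $z\in\{0,\sup\dom l^*\}$. Your approach buys independence from the compactness machinery (and yields a quantitative, uniform modulus), whereas the paper's softer compactness argument reuses infrastructure it needs anyway for the existence of optimizers and the DPP. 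Two small points to tighten: (i) the statement ``a finite convex function on a closed interval is continuous'' is false at the endpoints in general — you need the lower semicontinuity of $l^*$ (automatic for a convex conjugate) combined with convexity to conclude continuity of $l^*$ on its closed domain; (ii) under \assl{} the lower endpoint of $\dom l^*$ is indeed $0$ (since $l$ is bounded below), which justifies your assertion $\partial\zspacegen\subseteq\{0,\sup\dom l^*\}$, but this deserves a line, as the paper's own proof allows for a positive lower endpoint $a>0$.
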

\begin{proof}
  Since $f$ is bounded from below we have $V>-\infty$, and by the polynomial growth property of $f$ and $l$, and the representation \eqref{eq_V_ext}, that $V<\infty$.
  Recall that, due to the growth conditions on $b$ and $\sigma$ the random variable $Y_T^{s,y,\alpha}$ has moments of every order.

  \step[Upper semicontinuity.] \label{step:continuity:uppersemi}
  Regarding the continuity statement, let $(s^n, y^n,z^n)$ be a sequence converging to $(s,y,z)$.
  For every $\alpha \in {\cal A}$, it follows by standard stability results for SDEs see e.g. \cite[Section V.5]{protter01} that
  $Y^{s^n,y^n,\alpha}_T$ converges to $Y^{s,y,\alpha}_T$ in $L^p$ for all $p<\infty$.
  Thus, for every $r \in \mathbb{R}$, by dominated convergence, continuity of $l$ and $f$ and their polynomial growth, we have that
  \begin{align*}
    \limsup_{n\to \infty}V(s^n,y^n,z^n) &\le \limsup_{n \to \infty} \E[l(f(Y^{s^n,y^n,\alpha}_T) - r) + rz^n]\\
    &=\E[l(f(Y^{s,y,\alpha}_T) - r) + rz].
  \end{align*}
  This shows that
  \begin{equation*}
   \limsup_{n\to \infty}V(s^n,y^n,z^n)  \le V(s, y, z),
  \end{equation*}
  from which upper semicontinuity follows.

  \step[Lower semicontinuity on the interior.]
  To prove lower semicontinuity, let $(s^n, y^n,z^n)$ be a sequence converging to $(s,y,z)$.
  For every $n$, there is $\alpha^n \in {\cal A}$ such that
  \begin{equation*}
    V(s^n,y^n, z^n) \ge \E\Big[Z^{s^n,z^n,\beta}_Tf(Y^{s^n,y^n,\alpha^n}_T) -l^*(Z^{s^n,z^n,\beta}_T)\Big] -\frac 1n \quad \text{for all}\quad \beta \in {\cal L}_b.
  \end{equation*}
  Let $Q$ be the probability measure absolutely continuous with respect to $P$ and with Radon--Nikodym density $Z^{s,z,\beta}_T$ and $W^Q:= W + \int_0^\cdot\beta_u\,du$.
  By Girsanov's theorem $W^Q$ is a $Q$-Brownian motion and $Y^{s^n, y^n,\alpha^n}_t = y^n + \int_{s^n}^tb(u, Y^{s^n,y^n,\alpha^n}_u, \alpha^n_u) + \sigma(u, Y^{s^n,y^n,\alpha^n}_u, \alpha^n_u)\beta_u\,du + \int_{s^n}^t\sigma(u, Y^{s^n,y^n,\alpha^n}_u, \alpha^n_u)\,dW_u^Q$.
  Lemma \ref{lem_compactness_measurable} then ensures the existence of a control $\alpha \in {\contrProcSet}$ such that, up to a subsequence, it holds that $\E_Q[f(Y^{s^n,y^n,\alpha^n}_T)] \to \E_Q[f(Y^{s,y,\alpha}_T)]$.
  Moreover, since $Z^{s^n, y^n, \beta}_T$ converges to $Z^{s,y,\beta}_T$ in $L^2$, $\beta$ is bounded and $f$ of polynomial growth, we have $\E[Z^{s^n, y^n,\beta}_Tf(Y^{s,y,\alpha}_T)]\to \E[Z^{s,y,\beta}_Tf(Y^{s,y,\alpha}_T)]$.
  Therefore, it follows by triangular inequality that
  \begin{equation*}
    \E[Z_T^{s^n,z^n,\beta}f(Y^{s^n,y^n,\alpha^n}_T)] \to \E[Z^{s,y,\beta}_Tf(Y^{s,y,\alpha}_T)].
  \end{equation*}
  Hence, by continuity of $l^*$ on its domain, we have that
  \begin{equation*}
    \liminf_{n \to \infty}V(s^n, y^n,z^n) \ge \E[Z^{s, y, \beta}_Tf(Y_T^{s,y,\alpha}) - l^*(Z^{s,y,z}_T)],
  \end{equation*}
  and since $\beta \in {\cal L}_b$ was taken arbitrarily this allows to conclude
  \begin{equation*}
    \liminf_{n \to \infty}V(s^n, y^n,z^n) \ge V(s, y, z).
  \end{equation*}

  \step[Boundary value \eqref{eq:boundary con}.]
  Assume $s<T$ and $z \in  \partial {\zspacegen}$.
  There are $a \in [0,\infty)$ and $b \in (0,\infty]$ such that $\intdom (l^*) = (a,b)$.
  Thus, $ \partial {\zspacegen} = \{a,b\}$ if $b<\infty$ and $ \partial {\zspacegen} = \{a\}$ otherwise.
  If $a = 0$ and $b=\infty$, it is clear, by \eqref{eq:boundary con}, that $z \in  \partial {\zspacegen}$ implies, $V(s,y,z) = l^*(0)$.
  Let us assume $a>0$.
  If $\beta\in {\cal L}$ is such that $P\otimes dt(\beta_t\neq0)>0$, then $P(Z^{s,1,\beta}_T\neq 1)>0$ because otherwise, $Z^{s,1,\beta}_t = 1$ $P$-a.s. for every $t\ge s$ and thus $\beta=0$, a contradiction.
  And since $Z^{s,1,\beta}_T\neq 1$ with positive probability, it follows that $aZ^{s,1,\beta}_T\notin \dom(l^*)$ with positive probability.
  In fact, if $aZ^{s,1,\beta}_T \in \dom(l^*) \subseteq [a,\infty)$, then $aZ^{s,1,\beta}_T\ge1$.
  Since $Z^{s,1,\beta}$ is a martingale starting at 1, this implies that $Z^{s,1,\beta}_T=1$, a contradiction.
  Thus, $\E[aZ_T^{s,1,\beta} f(Y^{s,y,\alpha}_T) -l^*(aZ_T^{s,1,\beta})] = -\infty$.
  If $\beta = 0$, then $\E[aZ_T^{s,1,0} f(Y^{s,y,\alpha}_T) -l^*(aZ_T^{s,1,0})] = a\E[f(Y^{s,y, \alpha}_T)] - l^*(a)$.
  Hence,
  \begin{equation*}
    V(s,y,a) = \inf_{\alpha\in \mathcal{A}}a \E[f(Y^{s,y, \alpha}_T)] - l^*(a) = a\phi(s,y)-l^*(a).
  \end{equation*}
  The case $0<b<\infty$ is handled analogously.

  \step[Lower semicontinuity on the boundary.]
  If the domain of $l^*$ is closed,
  upper semicontinuity on $[0,T]\times \closure{\dpeyzspace}$ follows exactly as in Step \ref{step:continuity:uppersemi}.
  As to lower semicontinuity, let $(s^n,y^n,z^n)\in [0,T]\times \closure{\dpeyzspace} $ converge to $(s,y,z)$ and $z\in\partial\zspacegen$.
  Then, by definition of $\phi$:
  \begin{align*}
   	\limsup_{n\to \infty}V(s^n,y^n,z^n) \ge \limsup_{n\to \infty}z^n\phi(s^n,y^n) - l^*(z^n) = z\phi(s,y) - l^*(z) = V(s,y,z).
   \end{align*}
\end{proof}

Consider the ``approximate value function''
\begin{equation}
\label{eq:approx V}
	V^n(s,y,z):=\inf_{\alpha\in \mathcal{A}}\sup_{\beta\in {\cal L}_n} \E[Z_T^{s,z,\beta} f(Y^{s,y,\alpha}_T) -l^*(Z_T^{s,z,\beta})],
\end{equation}
with ${\cal L}_n:=\{\beta \in {\cal L}_b: |\beta|\le n \}$.

\begin{proposition}
\label{prop aprox}
    If assumptions \assb{}, \assf{} and \assl{} are satisfied, then $(V^n)$ converges pointwise to $V$.
\end{proposition}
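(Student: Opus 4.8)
The plan is to show the two inequalities $\limsup_n V^n \le V$ and $\liminf_n V^n \ge V$ separately. The first is immediate: since ${\cal L}_n \subseteq {\cal L}_b$ for every $n$, the supremum defining $V^n$ is taken over a smaller set than the one defining $V$ (in the form given by Proposition \ref{pro:open-markov}), hence $V^n(s,y,z) \le V(s,y,z)$ for all $n$ and all $(s,y,z)$, and in particular $\limsup_n V^n \le V$. The sequence $(V^n)$ is also clearly nondecreasing in $n$, since ${\cal L}_n \subseteq {\cal L}_{n+1}$, so the pointwise limit $\lim_n V^n =: V^\infty$ exists (in $\R \cup \{+\infty\}$, though finiteness will follow) and satisfies $V^\infty \le V$.

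For the reverse inequality I would fix $(s,y,z)$ and argue that $\sup_{\beta \in {\cal L}_b}$ can be approximated by $\sup_{\beta \in {\cal L}_n}$ uniformly enough in $\alpha$. Recall from the proof of Proposition \ref{pro:open-markov} (via \cite[Proposition 2.7, 2.8]{OCE-PDE}) that for each fixed $\alpha$,
\[
	\sup_{\beta \in {\cal L}_b}\E\left[f(Y^{s,y,\alpha}_T)Z^{s,z,\beta}_T - l^*(Z^{s,z,\beta}_T)\right] = \rho^{l_z}(zf(Y^{s,y,\alpha}_T)) = \inf_{r\in\R}\left(\E[l(f(Y^{s,y,\alpha}_T) - r)] + rz\right),
\]
and the analogous identity holds with ${\cal L}_n$ in place of ${\cal L}_b$ if one restricts the OCE dual representation to bounded densities with bounded logarithmic derivative; since $f$ is bounded by \assf{}, the optimal $r$ in the infimum ranges over a compact set independent of $\alpha$, and the corresponding near-optimal dual element $Z$ can be taken of the form $Z^{s,z,\beta}_T$ with $\beta$ bounded (this is exactly the content of the approximation in \cite[Proposition 2.7]{OCE-PDE}). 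Consequently, for every $\varepsilon > 0$ there is an $n_\varepsilon$, \emph{independent of $\alpha$}, such that
\[
	\sup_{\beta \in {\cal L}_{n_\varepsilon}}\E\left[f(Y^{s,y,\alpha}_T)Z^{s,z,\beta}_T - l^*(Z^{s,z,\beta}_T)\right] \ge \rho^{l_z}(zf(Y^{s,y,\alpha}_T)) - \varepsilon
\]
for all $\alpha \in {\contrProcSet}$. Taking the infimum over $\alpha$ on both sides gives $V^{n_\varepsilon}(s,y,z) \ge V(s,y,z) - \varepsilon$, and letting $\varepsilon \to 0$ yields $V^\infty(s,y,z) \ge V(s,y,z)$, completing the proof.

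The main obstacle is precisely the \emph{uniformity in $\alpha$} of the truncation level $n_\varepsilon$: one needs that the sup over ${\cal L}_b$ is well-approximated by the sup over ${\cal L}_n$ by a bound that does not depend on which control $\alpha$ is in play. This is where the boundedness of $f$ (hence of $f(Y^{s,y,\alpha}_T)$, uniformly in $\alpha$) is essential — it confines both the optimal shift $r$ and the size of the near-optimal Radon--Nikodym density to a compact region determined only by $\|f\|_\infty$, $z$, and the loss function $l$, not by $\alpha$. I would make this quantitative by invoking the construction in \cite[Proposition 2.7]{OCE-PDE} directly: it produces, for a given bounded random variable, an explicit bounded $\beta$ achieving the OCE value up to $\varepsilon$, with a bound depending only on the sup-norm of the random variable. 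One minor point to dispatch along the way is measurability/finiteness of $V^n$, which follows from the same compactness argument as for $V$ (Lemma \ref{lem_compactness_measurable}) together with the already-established real-valuedness of $V$ in Lemma \ref{lem:V-continuous} and the sandwich $V^n \le V < \infty$.
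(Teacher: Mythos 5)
Your first half is fine and matches the paper: $\mathcal{L}_n \subseteq \mathcal{L}_b$ gives $V^n \le V$ and monotonicity in $n$, hence $\limsup_n V^n \le V$. The problem is the second half. Your entire argument rests on the claim that for every $\varepsilon>0$ there is a truncation level $n_\varepsilon$, \emph{independent of $\alpha$}, such that $\sup_{\beta \in \mathcal{L}_{n_\varepsilon}}$ gets within $\varepsilon$ of $\sup_{\beta \in \mathcal{L}_b}$ for all $\alpha$ simultaneously. You justify this by asserting that the construction behind the dual representation produces a near-optimal $\beta$ whose bound depends only on $\|f\|_\infty$, $z$ and $l$. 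That assertion does not hold. Even when the near-optimal density $Z_T$ (morally $l'(f(Y^{s,y,\alpha}_T)-r^*)$, suitably normalized) is bounded above and below by constants depending only on $\|f\|_\infty$ and $l$, the process $\beta$ in the stochastic-exponential representation $Z_T = Z^{s,z,\beta}_T$ is \emph{not} controlled by $\|Z_T\|_\infty$: it is the logarithmic derivative of the martingale $\E[Z_T\mid\mathcal{F}_t]$, and its size depends on how $Z_T$ depends on the Brownian path. (For instance, for $Z_T$ a discontinuous function of $W_T$ the integrand blows up as $t\to T$; note also that $l$ need not be differentiable, e.g.\ for $\AVaR$.) The cited result only gives the qualitative identity $\rho(f(Y^{s,y,\alpha}_T))=\sup_{\beta\in\mathcal{L}_b}(\cdots)$ for each fixed $\alpha$; the bound on the approximating $\beta$ needed to reach accuracy $\varepsilon$ depends on the law of $Y^{s,y,\alpha}_T$, hence on $\alpha$. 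Without uniformity you only get $\lim_n \sup_{\mathcal{L}_n} = \sup_{\mathcal{L}_b}$ for each fixed $\alpha$, and an interchange of $\lim_n$ with $\inf_\alpha$ in the wrong direction, which is exactly the nontrivial content of the proposition.

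The paper avoids this by reversing the roles of the two players: fix a single $\beta \in \mathcal{L}_b$ (so $\beta \in \mathcal{L}_N$ for some finite $N$), pick for each $n \ge N$ a near-optimal $\alpha^n$ for $V^n$, and then use the $\tau$-compactness of $\{\text{Law}(Y^{s,y,\alpha,\beta})\}$ from Lemma \ref{lem_compactness_measurable} together with Girsanov to extract a limiting control $\alpha$ and pass to the limit in $\E[Z^{s,z,\beta}_T f(Y^n_T)]$. This yields $\liminf_n V^n \ge \E[Z^{s,z,\beta}_T f(Y^{s,y,\alpha}_T) - l^*(Z^{s,z,\beta}_T)]$ for the fixed $\beta$, and arbitrariness of $\beta$ then gives $\liminf_n V^n \ge V$. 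If you want to salvage your route, you would need to prove the uniform-in-$\alpha$ truncation estimate, e.g.\ by combining the compactness of the set of laws with some continuity of the required truncation level in the law --- but that is an additional argument you have not supplied and it is not obviously true.
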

\begin{proof}
	It is clear that $\limsup_{n\to \infty}V^n\le V$ pointwise on $[0,T]\times \yspace\times {\zspacegen}$.

	Let us prove that $\liminf_{n\to \infty}V^n\ge V$.
	Let $\beta \in {\cal L}_b$.
	There is $N$ such that $\beta \in {\cal L}_N$.
	For $n\ge N$, we can find $\alpha^n\in {\contrProcSet}$ such that putting $Y^n:=Y^{s,y,\alpha^n}$ one has
	\begin{align}
        1/n + V^n(s,y,z)\geq \E{\left[Z^{s,z,\beta}_T f(Y^n_T) -l^*(Z^{s,z,\beta}_T)\right]}. \label{eq approx}
	\end{align}
    Hence, for $\beta\in {\cal L}_n$ fixed, it follows by Lemma \ref{lem_compactness_measurable} and Girsanov's theorem that there is $\alpha \in {\contrProcSet}$ such that, up to a subsequence, $\E{\big[Z^{s,z,\beta}_T f(Y^n_T)\big]} \to \E{\big[Z^{s,z,\beta}_T f(Y^{s,y,\alpha}_T)\big]}$.
	Hence we may take limit in the $Y$'s in \eqref{eq approx} while leaving $\beta$ fixed, obtaining
    $$\liminf_n V^n(s,y,z) \geq  \E{\left[Z_T^{s,z,\beta} f(Y^{s,y,\alpha}_T) -l^*(Z_T^{s,z,\beta})\right]}. $$
	Thus, since $\beta$ was taken arbitrarily, we have
    $$ \liminf_{n} V^n(s,y,z) \geq \sup_{\beta\in {\cal L}_b} \E{\left[Z_T^{s,z,\beta} f(Y^{s,y,\alpha}_T) -l^*(Z_T^{s,z,\beta})\right]}.$$
	This yields
	\[
		\liminf_{n\to \infty}V^n(s,y,z) \ge V(s,y,z).
	\]
\end{proof}

We now have the following dynamic programming principle for the function $V$:
\begin{proposition}
\label{prop:DPP full}
    If assumption \assb{}, \assf{} and \assl{} are satisfied, and $\sigma$ is uncontrolled and non-degenerate, then
    the dynamic programming principle holds in the following form: For all $0\leq s\leq \theta\leq T$ we have
\begin{equation}
	\label{eq:fullDPP}
    V(s,y,z) = \inf_{\alpha \in {\contrProcSet}_{s,\theta}}\sup_{\beta \in {\cal L}_{b}^{s,\theta}}\E{\left[V(\theta, Y^{s,y,\alpha}_\theta, Z^{s,z,\beta}_\theta) \right]},
	\end{equation}
	where ${\contrProcSet}_{s,\theta}$ denotes the restriction of the elements in ${\contrProcSet}$ to the interval $[s,\theta]$, with a similar notation for ${\cal L}_{b}^{s,\theta}$.
	Equation \eqref{eq:fullDPP} also holds for $V^n$ (defined in \eqref{eq:approx V}) instead of $V$, with ${\cal L}_{b}^{s,\theta}$ replaced by ${\cal L}_{n}^{s,\theta}$, and defined analogously.
\end{proposition}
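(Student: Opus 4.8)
The plan is to establish the dynamic programming principle by the two standard inequalities, exploiting the min-max structure in \eqref{eq:value} together with the flow property of the SDEs \eqref{eq:control.SDE} and \eqref{eq:sde.Z} and the measurability/compactness machinery already assembled. Throughout I write the running cost-free game value $V(s,y,z)=\inf_{\alpha}\sup_{\beta}\E[Z^{s,z,\beta}_T f(Y^{s,y,\alpha}_T)-l^*(Z^{s,z,\beta}_T)]$, and I will work first with $V^n$ (where $\beta$ ranges over the bounded set ${\cal L}_n$, which makes the exponential martingale $Z$ genuinely bounded in every $L^p$ and $f(Y_T)Z_T\in L^1$ uniformly), and then pass to the limit via Proposition \ref{prop aprox}.

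\medskip

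\textbf{Step 1: the ``$\le$'' (subsolution-type) inequality.} Fix $\alpha\in\contrProcSet_{s,\theta}$ and $\beta\in{\cal L}_n^{s,\theta}$ on $[s,\theta]$ and an arbitrary continuation $(\alpha',\beta')$ on $[\theta,T]$. By the flow property, $Y^{s,y,\alpha\oplus\alpha'}_T = Y^{\theta, Y^{s,y,\alpha}_\theta, \alpha'}_T$ and $Z^{s,z,\beta\oplus\beta'}_T = Z^{\theta, Z^{s,z,\beta}_\theta,\beta'}_T$, and, conditioning on ${\cal F}_\theta$ and using that $(Y^{s,y,\alpha}_\theta, Z^{s,z,\beta}_\theta)$ is ${\cal F}_\theta$-measurable together with the Markov property of the driving SDEs, we get
\begin{equation*}
\E\bigl[Z^{s,z,\beta\oplus\beta'}_T f(Y^{s,y,\alpha\oplus\alpha'}_T) - l^*(Z^{s,z,\beta\oplus\beta'}_T)\bigr]
= \E\Bigl[\E\bigl[Z^{\theta,\zeta,\beta'}_T f(Y^{\theta,\eta,\alpha'}_T) - l^*(Z^{\theta,\zeta,\beta'}_T)\bigr]\big|_{(\eta,\zeta)=(Y^{s,y,\alpha}_\theta, Z^{s,z,\beta}_\theta)}\Bigr],
\end{equation*}
where the inner conditional expectation is bounded above by $V(\theta,\eta,\zeta)$ for every choice of $\beta'$ and, taking the supremum over $\beta'$, can be made arbitrarily close to $V(\theta,\eta,\zeta)$ for a well-chosen $\alpha'$; minimizing over $\alpha'$ we obtain $\sup_{\beta}\E[\,\cdots\,]\ge \E[V(\theta, Y^{s,y,\alpha}_\theta, Z^{s,z,\beta}_\theta)]-\varepsilon$ with the roles arranged so that passing $\inf_\alpha$ and $\varepsilon\downarrow 0$ through yields $V(s,y,z)\ge \inf_\alpha\sup_\beta \E[V(\theta,\cdot,\cdot)]$; the subtlety of pasting a \emph{measurable selection} of near-optimal continuations $\alpha'$ depending on $(\eta,\zeta)$ is handled by a standard measurable-selection argument as in the proof of Lemma \ref{lem_compactness_measurable}. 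Care must be taken that the dual representation $\rho(f(Y_T))=\sup_\beta\E[\cdots]$ of Proposition \ref{pro:open-markov} survives the conditioning; this is where working with the enlarged value $V$ rather than $\rho$ directly pays off, since $V(\theta,\cdot,\cdot)$ is by construction already an $\inf$-$\sup$ over continuations.

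\medskip

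\textbf{Step 2: the reverse ``$\ge$'' inequality.} Here, given $\varepsilon>0$, pick a near-optimal $\alpha\in\contrProcSet$ for the left side on $[s,T]$. Conditioning on ${\cal F}_\theta$ and using again the flow property, one shows that for every $\beta$ the map $\beta\mapsto\E[\cdots]$ decomposes through $\E[\text{(inner sup over continuations of }\beta)]$; choosing on $[s,\theta]$ the restriction of $\alpha$ and an arbitrary $\beta$, and on $[\theta,T]$ an $\varepsilon$-optimal dual continuation $\beta'$ (again measurably selected in $(\eta,\zeta)$), we obtain $\E[V(\theta,Y^{s,y,\alpha}_\theta,Z^{s,z,\beta}_\theta)]\le V(s,y,z)+\varepsilon$ for that $\alpha$, hence $\inf_{\alpha'}\sup_\beta \E[V(\theta,\cdot,\cdot)]\le V(s,y,z)+\varepsilon$. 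The continuity of $V$ from Lemma \ref{lem:V-continuous} makes $(\eta,\zeta)\mapsto V(\theta,\eta,\zeta)$ Borel (indeed continuous), which legitimizes the conditional-expectation manipulations and the measurable selection; the uniform integrability needed to condition is guaranteed, at the level of $V^n$, by the boundedness of $f$ and the $L^2$-bound on $Z^{s,z,\beta}_T$ for $\beta\in{\cal L}_n$.

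\medskip

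\textbf{Step 3: passage to the limit for $V$.} Steps 1--2 give \eqref{eq:fullDPP} with $V$ replaced by $V^n$ and ${\cal L}_b^{s,\theta}$ by ${\cal L}_n^{s,\theta}$. To get the statement for $V$ itself, note $V^n\uparrow V$ pointwise (Proposition \ref{prop aprox}), and by the continuity in Lemma \ref{lem:V-continuous} the convergence is, on compacts, uniform enough (or: apply monotone convergence on the right-hand side together with the sup/inf being over the increasing family ${\cal L}_n\uparrow{\cal L}_b$) to pass to the limit in both the inner expectation and the outer $\sup_\beta$; the $\inf_\alpha$ passes through by a diagonal argument using Lemma \ref{lem_compactness_measurable}, exactly as in the proof of Proposition \ref{prop aprox}.

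\medskip

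\textbf{Main obstacle.} The delicate point is the \emph{pasting step} in both inequalities: one must glue an ${\cal F}_\theta$-measurable family of near-optimal continuation controls $\{\alpha'_{(\eta,\zeta)}\}$ (resp.\ dual controls $\{\beta'_{(\eta,\zeta)}\}$) into a single admissible progressively measurable process, and simultaneously verify that the conditional form of the OCE dual representation holds along the glued process. This requires a measurable selection theorem applied to the set-valued map $(\eta,\zeta)\mapsto\{\varepsilon\text{-optimizers}\}$, whose measurability follows from the joint (lower semi-)continuity of the objective in the initial data — itself a consequence of the SDE-stability estimates and Lemma \ref{lem_compactness_measurable} — together with boundedness of $f$ and the $l^*$-integrability on $\dom(l^*)$. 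Everything else (flow property, conditioning, Girsanov) is routine.
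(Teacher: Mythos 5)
Your overall template (flow property, conditioning on ${\cal F}_\theta$, pasting of near-optimal continuations) is the textbook route to a DPP, but the one step you explicitly defer --- the measurable pasting of an ${\cal F}_\theta$-measurable family of $\varepsilon$-optimal continuation controls into a single admissible process --- is precisely where all the work lies, and your proposed resolution does not go through as stated. The ``standard measurable selection argument as in Lemma \ref{lem_compactness_measurable}'' is a selection of a \emph{point} $a\in\contrValSet$ with $b(t,y,a)=\ell_t$, i.e.\ a finite-dimensional selection; what you need here is a measurable selection of $\varepsilon$-optimizers in a space of progressively measurable \emph{processes}, parametrized by the random initial data $(\eta,\zeta)=(Y^{s,y,\alpha}_\theta,Z^{s,z,\beta}_\theta)$, together with the identification of the conditional value with $V(\theta,\eta,\zeta)$. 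Naming this as the ``main obstacle'' is correct, but asserting it does not discharge it. The paper circumvents it in two different ways. For the inequality $V\geq\inf_\alpha\sup_\beta\E[V(\theta,\cdot)]$ it does not paste at all: it fixes an $\varepsilon$-optimal (Markov) control $\alpha^\varepsilon$ on all of $[s,T]$, invokes the already-established DPP for the \emph{fixed-control} value $V^{\alpha}(s,y,z)=\sup_\beta\E[\cdots]$ from \cite[Corollary 2.11]{OCE-PDE}, and then uses the pointwise bound $V^{\alpha^\varepsilon}(\theta,\cdot)\geq V(\theta,\cdot)$. For the reverse inequality it replaces $\varepsilon$-optimal selection by an \emph{exact} optimizer: the family $K^\gamma=\E[Z_T f(Y_T^{\theta,Y_\theta,\gamma})-l^*(Z_T)\mid{\cal F}_\theta]$, $\gamma\in\contrProcSet_{\theta,T}$, is shown to be directed downward, so its essential infimum is attained along a sequence, and Lemma \ref{lem_compactness_measurable} plus Girsanov produce a single admissible $\bar\gamma$ attaining $\essinf_\gamma K^\gamma$; pasting $\alpha 1_{[s,\theta)}+\bar\gamma 1_{[\theta,T]}$ then needs no selection theorem.

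A second, smaller gap is your Step 3. The paper proves the DPP for $V$ directly (and notes the proof for $V^n$ is identical), so no limit passage is needed; your plan instead requires interchanging $\lim_n$ with $\inf_\alpha\sup_\beta\E[V^n(\theta,\cdot)]$. Monotone convergence handles the expectation and the supremum, and gives $\inf_\alpha\sup_\beta\E[V^n(\theta,\cdot)]\leq\inf_\alpha\sup_\beta\E[V(\theta,\cdot)]$ for free, but the reverse inequality (commuting $\lim_n$ with $\inf_\alpha$) needs a compactness or uniformity argument that you gesture at but do not supply. Also note a sign slip in your Step 1: for a fixed continuation $\alpha'$ the inner conditional expectation is bounded above by $V^{\alpha'}(\theta,\eta,\zeta)$, which dominates $V(\theta,\eta,\zeta)$ rather than being dominated by it; the inequality you want only appears after minimizing over $\alpha'$, which is exactly the selection problem above.
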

\begin{proof}
    By Proposition \ref{pro:open-markov} we have that
	$V(s,y,z) = \inf_{\alpha \in {\contrProcSet}^{M, L}}V^\alpha(s,y,z)$ with
	\begin{equation}
        V^\alpha(s,y,z) := 	\sup_{\beta \in {\cal L}_b}\E{\left[f(Y^{s,y,\alpha}_T)Z_T^{s,z,\beta} - l^*(Z^{s,z,\beta}_T) \right]},
	\end{equation}
	for each $\alpha \in {\contrProcSet}^{M,L}$.
	It was shown in \cite[Corollary 3.8]{OCE-PDE} that $V^\alpha$ satisfies the DPP
	\begin{equation}
		V^\alpha(s,y,z) = \sup_{\beta \in {\cal L}_b^{s,\theta}}\E[ V^\alpha(\theta, Y^{s, y, \alpha}_{\theta}, Z^{s, z, \beta}_\theta)].
	\end{equation}
	Now let $\varepsilon>0$.
	Then, there is a control $\alpha^\varepsilon \in {\contrProcSet}^{M,L}$ (depending also on $s,y,z$) such that $V(s,y,z) \ge V^{\alpha^\varepsilon}(s,y,z)-\varepsilon$.
	Thus, we have
	\begin{align*}
        V(s,y,z) &\ge \sup_{\beta \in {\cal L}_b^{s,\theta}}\E{\left[V^{\alpha^\varepsilon}(\theta, Y^{s, y, \alpha^\varepsilon}_{\theta}, Z^{s, z, \beta}_\theta) \right]} - \varepsilon\\
        &\ge \sup_{\beta \in {\cal L}_b^{s,\theta}}\E{\left[V(\theta, Y^{s, y, \alpha^\varepsilon}_{\theta}, Z^{s, z, \beta}_\theta) \right]} - \varepsilon\\ &\ge \inf_{\alpha \in {\contrProcSet}}\sup_{\beta \in {\cal L}_b^{s,\theta}}\E{\left[V(\theta, Y^{s, y, \alpha}_{\theta}, Z^{s, z, \beta}_\theta) \right]} - \varepsilon.
	\end{align*}
	Sending $\varepsilon$ to zero we conclude that the l.h.s.\ in \eqref{eq:fullDPP} is the greater one.

Let us now show the reverse inequality
  \begin{equation}
  \label{eq:halfDPP}
    V(s,y,z) \le \inf_{\alpha \in {\contrProcSet}_{s,\theta}}\sup_{\beta \in {\cal L}_b^{s,\theta}}\E{\left[V(\theta, Y^{s,y,\alpha}_\theta, Z^{s,z,\beta}_\theta) \right]}
  \end{equation}
  for all $[s,T]$-valued stopping time $\theta$.
  To that end, let $(s,y,z)\in [0,T]\times \yspace\times {\zspacegen}$, $\theta$ a $[s,T]$-valued stopping time, $\alpha \in {\contrProcSet}_{s,\theta}$ and $\beta \in {\cal L}_b$.
  Notice that the set
  \begin{equation*}
        \left\{K^\gamma:= \E{\left[Z^{\theta, Z^{s,z,\beta}_\theta, \beta}_Tf(Y_T^{\theta, Y_\theta^{s,y,\alpha}, \gamma}) - l^*(Z^{\theta, Z^{s,z,\beta}_\theta, \beta}_T)\mid {\cal F}_\theta\right]}: \gamma \in {\contrProcSet}_{\theta,T} \right\}
  \end{equation*}
  is directed downward.
  In fact, let $\gamma^1, \gamma^2 \in {\contrProcSet}_{\theta,T}$.
  Putting $\gamma_t:= \gamma_t^11_{\{K^{\gamma^1}<K^{\gamma^2}\}}+\gamma_t^21_{\{K^{\gamma^1}\ge K^{\gamma^2}\}} $ on $\{t\ge \theta\}$ and $\gamma_t = 0$ on $\{t<\theta\}$, it holds $\gamma \in {\contrProcSet}_{\theta, T}$ and $K^{\gamma} \le K^{\gamma^1}\wedge K^{\gamma^2}$.
  Thus, there is a sequence $(\gamma^n)$ in ${\contrProcSet}_{\theta, T}$ such that
    \[\lim_{n\to \infty} K^{\gamma^n} = \essinf_{\gamma \in {\contrProcSet}_{\theta,T}}K^\gamma.\]
By Lemma \ref{lem_compactness_measurable} and Girsanov's theorem, there is an admissible $\bar\gamma \in {\cal A}_{\theta,T}$ such that
\begin{equation*}
  \lim_{n\to \infty}K^{\gamma^n} = K^{\bar\gamma} \quad P\text{-a.s.}
\end{equation*}
  That is, $\essinf_{\gamma \in {\contrProcSet}_{\theta,T}}K^\gamma = K^{\bar\gamma}$.
  Using that $\bar\gamma$ is optimal, it follows that for $\bar\alpha:= \alpha1_{[0,\theta)} + \bar\gamma1_{[\theta,T]} $, one has
  \begin{align*}
        &\E{\left[Z^{s,y,\beta}_Tf(Y^{s,y,\bar\alpha}_T) - l^*(Z^{s,z,\beta}_T) \right]}= \E{\left[ \E\left[Z^{\theta, Z^{s,z,\beta}_\theta, \beta}_Tf(Y_T^{\theta, Y_\theta^{s,y,\alpha}, \bar\gamma}) - l^*(Z^{\theta, Z^{s,z,\beta}_\theta, \beta}_T) \mid {\cal F}_\theta \right] \right]}\\
        &\quad= \E{\left[\essinf_{\gamma \in {\contrProcSet}_{\theta,T}} \E{\left[Z^{\theta, Z^{s,z,\beta}_\theta, \beta}_Tf(Y_T^{\theta, Y_\theta^{s,y,\alpha}, \gamma}) - l^*(Z^{\theta, Z^{s,z,\beta}_\theta, \beta}_T) \mid {\cal F}_\theta \right]} \right]}\\
        &\quad\le \E{\left[\esssup_{\beta' \in {\cal L}^{\theta,T}_b}\essinf_{\gamma \in {\contrProcSet}_{\theta,T}} \E{\left[Z^{\theta, Z^{s,z,\beta}_\theta, \beta'}_Tf(Y_T^{\theta, Y_\theta^{s,y,\alpha}, \gamma}) - l^*(Z^{\theta, Z^{s,z,\beta}_\theta, \beta'}_T) \mid {\cal F}_\theta \right]} \right]}.
  \end{align*}
  Since $\beta$ was taken arbitrarily, the last inequality implies
  \begin{equation*}
        V(s,y,z)=\inf_{\alpha \in {\contrProcSet}}\sup_{\beta \in {\cal L}_b}\E{\left[Z^{s,y,\beta}_Tf(Y^{s,y,\alpha}_T) - l^*(Z^{s,z,\beta}_T) \right]} \le \sup_{\beta \in {\cal L}_b^{s,\theta}}\E{\left[V^n(\theta, Y^{s,y,\alpha}_\theta, Z^{s,z,\beta}_\theta) \right]}.
  \end{equation*}
  The claim then follows since $\alpha\in {\contrProcSet}_{s,\theta}$ was taken arbitrarily.

The proof for $V^n$ is the same.
 \end{proof}

\begin{lemma}
\label{lem:concave}
If \assl{} and \assf{}
 are satisfied, then for every $(t,y)$, the function $V(t,y,\cdot)$ is concave on ${\zspacegen}$.
\end{lemma}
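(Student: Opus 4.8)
The plan is to exhibit $V(t,y,\cdot)$ as a pointwise infimum of affine functions of $z$ and then invoke the elementary fact that such an infimum is concave. First I would record the primal representation~\eqref{eq_V_ext}, namely
\[
  V(t,y,z) = \inf_{\alpha\in\contrProcSet,\; r\in\R}\bigl(\E[l(f(Y_T^{t,y,\alpha}) - r)] + rz\bigr).
\]
This has not been isolated as a separate lemma, but it follows from the identities already used in the proof of Proposition~\ref{pro:open-markov}: by \cite[Propositions~2.7 and 2.8]{OCE-PDE} one has, for every $(s,y,\alpha)$,
\[
  \sup_{\beta\in{\cal L}_b}\E\bigl[Z_T^{s,z,\beta}f(Y_T^{s,y,\alpha}) - l^*(Z_T^{s,z,\beta})\bigr] = \rho^{l_z}\bigl(zf(Y_T^{s,y,\alpha})\bigr) = \inf_{r\in\R}\bigl(\E[l(f(Y_T^{s,y,\alpha}) - r)] + rz\bigr),
\]
where in the last equality I would use the change of variables replacing $r$ by $rz$, which is legitimate because $\zspacegen = \intdom(l^*)\subseteq(0,\infty)$ ($l$ increasing and bounded below forces $\dom(l^*)\subseteq[0,\infty)$). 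Taking the infimum over $\alpha\in\contrProcSet$ and comparing with \eqref{eq:value} yields the displayed formula.

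Granting this, the argument is immediate. For each fixed $(\alpha,r)\in\contrProcSet\times\R$ the quantity $\E[l(f(Y_T^{t,y,\alpha}) - r)]$ is independent of $z$, so $z\mapsto\E[l(f(Y_T^{t,y,\alpha}) - r)] + rz$ is affine, hence concave, on $\zspacegen$, with values in $(-\infty,+\infty]$ since $l$ is bounded below. The pointwise infimum of an arbitrary family of concave functions is concave (regarded as a function with values in $[-\infty,+\infty)$), so $V(t,y,\cdot)$ is concave in this extended sense; and since Lemma~\ref{lem:V-continuous} asserts that $V$ is real-valued on $[0,T]\times\dpeyzspace$, the infimum is nowhere $-\infty$, so $V(t,y,\cdot)$ is a genuine finite concave function on $\zspacegen$.

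I do not expect any real obstacle. The only point requiring a word of care is the passage from the dual ($\sup_\beta$) description of $V$ in~\eqref{eq:value} to the primal ($\inf_r$) description~\eqref{eq_V_ext}, including the verification that $z>0$ throughout $\zspacegen$ so that the reparametrization $r\mapsto rz$ is valid; both are handled above, and everything else is routine convexity bookkeeping.
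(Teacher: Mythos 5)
Your proof is correct and takes essentially the same route as the paper's: both pass from the dual representation \eqref{eq:value} to the primal form \eqref{eq_V_ext} via \cite[Proposition 2.8]{OCE-PDE} and the reparametrization $r\mapsto rz$ (legitimate since $\zspacegen=\intdom(l^*)\subseteq(0,\infty)$), and then note that $V(t,y,\cdot)$ is a pointwise infimum of functions affine in $z$. The paper leaves this last step as ``easily derived''; you simply spell out the details, including the finiteness of $V$ from Lemma~\ref{lem:V-continuous}.
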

\begin{proof}
	The proof follows from \cite[Proposition 3.3]{OCE-PDE} where it is shown that for every $\alpha\in {\contrProcSet}$ and $z>0$ it holds that
	\begin{equation*}
        \sup_{\beta \in {\cal L}_b}\E{\left[f(Y^{s,y,\alpha}_T)Z_T^{s,z,\beta} - l^*(Z^{s,z,\beta}_T) \right]} = \rho^{l_z}\left(zf(Y^{s,y,\alpha}_T)\right)
	\end{equation*}
	where $\rho^{l_z}$ is the OCE with loss function $l_z(x):=l(x/z)$.
	This representation and the definition of OCE show that
	\begin{equation*}
		V(s,y,z) = \inf_{\alpha \in {\contrProcSet}}\inf_{r \in \R}\left(\E[l(f(Y^{s,y,\alpha}_T) -r)] +zr \right)
	\end{equation*}
	from which concavity is easily derived.
\end{proof}

We can finally produce the proof of Theorem \ref{thm:existence}:

\begin{proof}[Proof of Theorem \ref{thm:existence}]
	Let us first use the inequality
	\begin{equation}
	\label{eq:halfDPP_repeated}
    V^n(s,y,z) \le \inf_{\alpha \in {\contrProcSet}_{0,\theta}}\sup_{\beta \in {\cal L}^n}\E{\left[V^n(\theta, Y^{s,y,\alpha}_\theta, Z^{s,z,\beta}_\theta) \right]}.
	\end{equation}
 to show that $V^n$ is a viscosity subsolution of the HJBI equation
	\begin{align}
	\label{eq:hjbi_n}
		\begin{cases}
        -\partial_tV^n-\inf_{a \in \contrValSet} b(t, y, a) \partial_yV^n -\frac{1}{2}\Tr\left(\ysigma\ysigma'(t, y)\partial^2_{yy}V^n\right)\\
				\qquad \qquad \qquad-\sup_{\beta\in\R^d, |\beta|\le n}\left (\frac{1}{2} z^2|\beta|^2\partial^2_{zz}V^n+ z \,\partial^2_{yz}V^n \ysigma(t, y)\beta\right) =0\\
			V^n(T,y,z) = zf(y) - l^*(z)\\
			V^n(t,y,z) = z\phi(z) - l^*(z) \text{ on } [0,T]\times \yspace \times \partial {\zspacegen}.
		\end{cases}
	\end{align}
  Hereby, we put $F^n$ the function such that the first line in the PDE \eqref{eq:hjbi_n} is given by $F^n(t, y,z, \partial_tV^n, \partial_y V^n, D^2 V^n)=0$.

	Let $\varphi \in C^2$ be a test function with bounded derivatives such that  $V^n -\varphi$ has a global maximum at $x=(s,y,z)\in [0,T]\times \yspace\times {\zspacegen}$ with $V^n(x) = \varphi(x)$.
	If $s = T$, then $\varphi(x) = zf(y) - l^*(z)$.

  If $s<T$ and $z \in \partial\zspacegen$, then it follows from Lemma \ref{lem:V-continuous} that $V(s,y,z) = z\phi(s,y) - l^*(z)$.

	Assuming $s<T$ and $z \notin  \partial {\zspacegen}$, then by \eqref{eq:halfDPP_repeated}, one has
	\begin{equation*}
        0\le \inf\limits_{\alpha \in {\contrProcSet}_{0,s+u}}\sup\limits_{\beta\in {\cal L}^n_b}\E{\left[\varphi(s+u, Y_{s+u}^{s,y}, Z^{s,z,\beta}_{s+u}) - \varphi(s,y,z) \right]}
	\end{equation*}
	for all $u\in (0,T-s)$.
	Let $\alpha \in \contrProcSet$ be arbitrary.
	Applying It\^o's formula to $t\mapsto \varphi(t, Y_{t}^{s,y,\alpha}, Z^{s,z,\beta}_{t})$ yields
	\begin{align}
  \notag
        0\,\le & \sup\limits_{\beta \in {\cal L}^n_b}\int_s^{s+u}\E\Bigl [ b(t, Y^{s,y,\alpha}_t, \alpha_t) \partial_y\varphi(t, Y^{s,y,\alpha}_t, Z^{s,z,\beta}_t) + \partial_t\varphi(t, Y_t^{s,y,\alpha}, Z^{s,z,\beta}_t)\\\notag
		& + \frac{1}{2}\Tr(\partial_{yy}\varphi(t, Y^{s,y,\alpha}_t, Z^{s,z,\beta}_t)\ysigma\ysigma'(t, Y^{s,y,\alpha}_t))+\frac{1}{2}\partial_{zz}\varphi(t, Y^{s,y,\alpha}_t, Z^{s,z,\beta}_t)|\beta_t|^2(Z_t^{s,z,\beta})^2\\
    \label{eq:ineq.for.subsol}
        & +\partial_{yz}\varphi(t, Y^{s,y,\alpha}_t, Z^{s,z,\beta}_t)\ysigma(t, Y^{s,y,\alpha}_t)\beta_tZ^{s,z,\beta}_t \Bigr ]\,dt .
	\end{align}
	Since $\varphi$ and its derivatives are Lipschitz continuous, and by Cauchy--Schwarz inequality and classical SDE estimates, there is a continuous function $t\mapsto R(t)$ with $R(0)=0$, further parametrized only by $\ysigma,s,b,\varphi,n,z,y$, such that
	\begin{align*}
        &0\, \le	 \sup\limits_{\beta \in {\cal L}^n_b}\int_s^{s+u} R(t-s)+ \E{\left[ \partial_y\varphi(t, Y^{s,y,\alpha}_t, Z^{s,z,\beta}_t) b(t, Y^{s, y, \alpha}_t,\alpha_t) + \partial_t\varphi{(t, Y^{s,y,\alpha}_t, Z^{s,z,\beta}_t)}\right]} \\
        & + \E\bigg[ \partial_{yz}\varphi{(t, Y^{s,y,\alpha}_t, Z^{s,z,\beta}_t)}\ysigma{(t, Y^{s,y,\alpha}_t)}\beta_tZ^{s,z,\beta}_t\\
        &\qquad + \frac{1}{2}\left(\Tr(\partial_{yy}\varphi(t, Y^{s,y,\alpha}_t, Z^{s,z,\beta}_t)\ysigma\ysigma'{(t, Y^{s,y,\alpha}_t)})+\partial_{zz}\varphi{(t, Y^{s,y,\alpha}_t, Z^{s,z,\beta}_t)}|\beta_t|^2(Z_t^{s,z,\beta})^2\right)\bigg ]dt.
	\end{align*}
	Observe that having a uniform bound on $\beta $ was essential here. As a consequence, we have
	\begin{align*}
        &0\le \int_s^{s+u}R(t-s) + \E\bigg[  \partial_y\varphi(t, Y^{s,y,\alpha}_t, Z^{s,z,\beta}_t)b(t, Y^{s, y, \alpha}_t,\alpha_t) + \partial_t\varphi(t, Y^{s,y,\alpha}_t, Z^{s,z,\beta}_t)\\
        &\quad +\frac{1}{2}\Tr(\partial_{yy}\varphi(t, Y^{s,y,\alpha}_t, Z^{s,z,\beta}_t)\ysigma\ysigma'(t, Y^{s,y,\alpha}_t))\bigg]\\
	 	&\quad+\E\bigg[g(t,Y^{s,y}_t)Z_t^{s,z,\beta}\\
        &\quad+\sup_{\beta \in \R^d:|\beta|\le n}\frac{1}{2}\partial_{zz}\varphi(t, Y^{s,y,\alpha}_t, Z^{s,z,\beta}_t)|\beta|^2(Z_t^{s,z,\beta})^2+\partial_{yz}\varphi(t, Y^{s,y,\alpha}_t, Z^{s,z,\beta}_t)\ysigma\beta Z_t^{s,z,\beta}\bigg]dt.
	\end{align*}
	Dividing by $u$, using dominated convergence, and letting $u$ go to $0$ gives
	\begin{equation*}
  F^n(s,y,z,\partial_t\varphi(s,y,z), \partial_y \varphi(s,y,z), D^2\varphi(s,y,z) ) \leq 0
	\end{equation*}
	showing that $V^n$ is a viscosity subsolution of \eqref{eq:hjbi_n}.

	The viscosity subsolution property of $V$ now follows by stability arguments.
	In fact, by Proposition \ref{prop aprox} and Lemma \ref{lem:V-continuous}, the sequence of continuous functions $(V^n)$ increases pointwise to the continuous function $V$.
	In combination with Dini's lemma it follows that $(V^{n})$ converges to $V$ uniformly on compacts.
  Denote by $F$ the function such that the first line in \refDPE{}  is given by $F(t, y,z, \partial_tV, \partial_yV, D^2V)=0$.

	Let us be given a test function $\varphi \in C^2$ such that $V -\varphi$ has a strict local maximum at $x_0=(s_0,y_0,z_0)\in [0,T)\times \yspace\times {\zspacegen}$.
  It can be checked using stability arguments that
the non-strict local maximum case can be obtained as a consequence of the strict case.
	Let $B_{r}(x_0):=\{x: |x-x_0|\le r\}$, with $r$ small enough so $x_0$ is the maximum of $V -\varphi$ on $B_{r}(x_0)$.
    Denote by $x_n = (s_n, y_n, z_n)$ the point at which $V^{n}-\varphi$ reaches its maximum in $B_r(x_0)$.
    We may suppose $x_n\to \bar{x}$. The uniform convergence on $B_r(x_0)$ of $V^n$ to $V$ yields $(V-\varphi)(x)= \lim (V^n-\varphi)(x) \leq \lim(V^n-\varphi)(x_n)=(V-\varphi)(\bar{x})$, and we conclude $\bar{x}=x_0$.
	As $V^{n}$ is a viscosity subsolution of \eqref{eq:hjbi_n}, $\varphi$ satisfies
	\begin{align}
	\nonumber
    \partial_t\varphi(x_n)&+\inf_{a \in \contrValSet} \bigg\{b(s_n, y_n, a) \partial_y\varphi(x_n)+\frac{1}{2}\Tr\left(\ysigma\ysigma'{(s_n,y_n)}\partial^2_{yy}\varphi(x_n)\right)\\
	\label{eq:visc_subVn}
		&+\sup_{\beta\in\R^d, |\beta|\le n}\left [\frac{1}{2} z^2|\beta|^2\partial^2_{zz}\varphi(x_n)+ z \,\partial^2_{yz}\varphi(x_n) \ysigma(s_n,y_n)\beta\right ] \bigg\}\ge0,\quad \text{for all }n\in \mathbb{N},
	\end{align}
  which implies
  \begin{equation*}
    F(x_n, \partial_t\varphi(x_n), \partial_y\varphi(x_n), D^2\varphi(x_n)) \leq 0.
  \end{equation*}
  Therefore, taking the limit inferior on both sides leads to $\underline F(x_0, \partial_t\varphi(x_0), \partial_y\varphi(x_0), D^2\varphi(x_0)) \leq 0$.

	Let us now prove the supersolution property.
  That the boundary condition is satisfied follows from Lemma \ref{lem:V-continuous}.
  It remains to check the interior condition.
	To that end, we rely on the half DPP
	\begin{equation}
        V(s, y, z) \ge \inf_{\alpha \in {\contrProcSet}}\sup_{\beta \in {\cal L}_b}\E{\left[V(\theta, Y^{s, y, \alpha}_{\theta}, Z^{s, z, \beta}_\theta) \right]}\quad \text{for all } s, y, z
	\end{equation}
  satisfied by $V$ (see Proposition \ref{prop:DPP full}).
  From this property the proof of the supersolution property follows by similar (and simpler) arguments as for the subsolution property.
  In fact, the stability argument is not needed here since after applying It\^o's formula to a test function, we obtain \eqref{eq:ineq.for.subsol} with the reverse inequality and without the supremum over $\beta$. 
\end{proof}

    Let us conclude this section by observing that it is common to write HJB equations with possibly singular Hamiltonians as our in the following form:
\begin{equation}\label{eqn:HJBminformulation}
    \widehat F(t, y,z, \partial_t V, \partial_y V, \partial^2_{yy} V, \partial^2_{zz} V, \partial^2_{yz} V) = \min \{ F, G\}(t, y,z, \partial_t V, \partial_y V, \partial^2_{yy} V, \partial^2_{zz} V, \partial^2_{yz} V) = 0
\end{equation}
for a suitable function $G$ and where $F$ is the left hand side in \eqref{eqn:dpe:int}.
The general idea behind this alternative structure appears in  \cite[Section 4.3]{Pham}.
It can be checked that choosing $G(t, y,z, \partial_t V, \partial_y V, \partial^2_{yy} V, \partial^2_{zz} V, \partial^2_{yz} V) = -\partial^2_{zz} V $, the equation \eqref{eqn:HJBminformulation} is equivalent to our formulation of viscosity solutions with upper and lower semi-continuous envelopes.

\section{Comparison}
\label{sec:comparison}
In this final section we prove the comparison principle leading to the proof of Theorem \ref{thm:uniqueness}, 
i.e., the uniqueness claim.
The following notation should simplify the exposition of Theorem~\ref{thm:comparison} below.
Let $\dpeparabbdry \eqdef (0, T] \times \yspace \times \zspacebdry \cup \{T\} \times \closure{\dpeyzspace}$ be the parabolic boundary of $\dpedom$.
For any $M \in \R^{\ydim+1 \times \ydim+1}$, we may write
\[
    M = \begin{bmatrix}
        Y & X \\
        X' & Z
    \end{bmatrix},
\]
where $Y \in \R^{\ydim \times \ydim}$, $X \in \R^{\ydim \times 1}$, and $Z \in \R$.
Hence, for $\yspace \times \R \ni (y, z) \mapsto \varphi(y, z) \in \R$ and $M = D^2 \varphi$, $Y$ is the Hessian in the $y$ variable, $Z$ is the second partial derivative in $z$, and $X$ is the vector of cross derivatives.
In the sequel, we will use the correspondence $M \leftrightarrow (Y, Z, X)$, with the understanding that it extends to diacritics and subscripts, e.g.\ $\hat{M} \leftrightarrow (\hat{Y}, \hat{Z}, \hat{X})$.

Let $F$ be the function such that \eqref{eqn:dpe:int} is given by
\[
    F(t, y,z, \partial_t V, \partial_y V, D^2 V) = 0.
\]
We note that the supremum can equivalently be taken over $\beta' = z\beta$, so we may drop the $z$-dependence from the notation.
Then, with
\[
  m_\beta(t, y) = \begin{bmatrix}
      \sigma(t,y) & \beta \\
      0 & 0
    \end{bmatrix}
  \in \R^{(\ydim + 1) \times (\ydim + 1)}
\]
and $\Hz$ defined by the second equality,
\begin{align*}
    F(t, y, p_t, p_y, M)
    &= - p_t - \inf_{a \in \contrValSet} b(t, y, a) p_y - \Hz(t, y, M), \\
    &= - p_t - \inf_{a \in \contrValSet} b(t, y, a) p_y - \sup_{\beta \in \R} \frac{1}{2} \Tr m_\beta(t,y)^\top m_\beta(t,y)  M.
\end{align*}
Recall that $\overline{\Hz}$ and $\underline{\Hz}$ denote the upper and lower semicontinuous envelopes of $\Hz$.
Lemma~\ref{lem:ishiiconsequence} establishes sufficient conditions for $\overline{\Hz}(t, y, M_u) - \Hz(s, \nu, M_v)$ to be suitably bounded.

The following lemma exploits the homogeneity of $F$ to transform \eqref{eqn:dpe:int} into a form better suited for proving comparison.

\begin{lemma}\label{lem:transformedPDE}
    If $u$ is a subsolution (supersolution) to
    $F(t, y, \partial_t u, \partial_y u, D^2 u) = 0$,
    then $e^{t} u$ is a subsolution (supersolution) to
    \begin{equation}%
        \label{eqn:properPDE}
        \tag{\ref{eqn:dpe:int}$'$}
        u + F(t, y,z, \partial_t u, \partial_y u, D^2 u) = 0.
    \end{equation}
\end{lemma}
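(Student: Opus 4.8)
The plan is to exploit the specific structure of $F$, namely that it is affine in $(p_t, M)$ and positively homogeneous of degree one in $M$ together with the purely first-order term $-\inf_a b(t,y,a)p_y$. Concretely, I would first record the scaling identity: for any scalar $c>0$ and any arguments,
\[
F(t,y,z,\,c\,p_t,\,c\,p_y,\,c\,M)\;=\;c\,F(t,y,z,p_t,p_y,M),
\]
and the same holds for $\underline F$ and $\overline F$ since envelopes commute with multiplication by a positive constant. This is immediate from the displayed formula for $F$: $-p_t$ and $-\inf_a b(t,y,a)p_y$ are linear in $(p_t,p_y)$, and $-\sup_\beta \tfrac12\Tr m_\beta m_\beta' M$ is positively homogeneous of degree one in $M$ (the supremum over $\beta\in\R$ of a quantity linear in $M$). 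The key point that makes the multiplier $e^t$ work is that the derivatives of $e^t u$ are $e^t$ times the derivatives of $u$ plus a correction \emph{only} in the $t$-derivative: $\partial_t(e^t u) = e^t(\partial_t u + u)$, while $\partial_y(e^t u)=e^t\partial_y u$ and $D^2(e^t u)=e^t D^2 u$ (the Hessian $D^2$ being taken in $(y,z)$ only).

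Next I would translate this into the viscosity formulation. Let $u$ be a subsolution of $F(\cdot,\partial_t u,\partial_y u,D^2 u)=0$ and set $v:=e^t u$. Given a test function $\psi\in C^2$ such that $v-\psi$ has a local max at $x_0=(s_0,y_0,z_0)$ with $\psi(x_0)=v(x_0)$, observe that $u - e^{-t}\psi$ has a local max at $x_0$ with $e^{-s_0}\psi(x_0)=u(x_0)$, so $\varphi:=e^{-t}\psi$ is an admissible test function for $u$. At interior points (i.e. $s_0<T$, $z_0\notin\partial\dpeyzspace$) the subsolution property of $u$ gives $\underline F(x_0,u(x_0),\partial_t\varphi(x_0),\partial_y\varphi(x_0),D^2\varphi(x_0))\le 0$. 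Using $\partial_t\varphi=e^{-t}(\partial_t\psi-\psi)$, $\partial_y\varphi=e^{-t}\partial_y\psi$, $D^2\varphi=e^{-t}D^2\psi$, together with the degree-one homogeneity applied with $c=e^{-s_0}$ and $v(x_0)=e^{s_0}u(x_0)$, this rearranges to
\[
\underline F\big(x_0,\psi(x_0)-v(x_0),\partial_t\psi(x_0),\partial_y\psi(x_0),D^2\psi(x_0)\big)\le 0,
\]
which upon moving the $-v(x_0)$ out of the $p_t$-slot — note $-\big(p_t - v\big) = -p_t + v$ — is exactly $v(x_0)+\underline F(x_0,\ldots)\le 0$, i.e. $v$ is a subsolution of \eqref{eqn:properPDE}. (One has to be mildly careful that $F$'s dependence on the value $V$ enters only through the relaxed envelopes in Definition \ref{def viscosity}; since $F$ as written does not actually depend on the $V$-argument, the envelopes $\underline F,\overline F$ are unaffected by the shift, and the only genuine bookkeeping is the $+v$ term coming from $\partial_t$.) The boundary and terminal conditions are handled separately and trivially: at $s_0=T$ or $z_0\in\partial\dpeyzspace$, multiplying the boundary inequality $u(x_0)\le z_0\phi(s_0,y_0)-l^*(z_0)$ (resp. $\le z_0 f(y_0)-l^*(z_0)$) by $e^{s_0}>0$ and absorbing it into the definition of a subsolution of \eqref{eqn:properPDE} — but in fact the statement of \eqref{eqn:properPDE} as used in the comparison section retains the \emph{same} boundary data, so one should check that the intended reading is that only the interior PDE is modified; I would state the lemma's conclusion with the understanding (consistent with its use) that the boundary/terminal conditions are left untouched, and the verification there is a one-line rescaling by $e^{s_0}$ of the original inequality.

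The supersolution case is verbatim the same argument with inequalities reversed and $\overline F$ in place of $\underline F$, using that homogeneity of degree one also holds for the upper envelope. I do not expect a genuine obstacle here — the whole content is the homogeneity identity plus the product rule — but the one point that needs care, and where a sloppy write-up could go wrong, is the exact placement of the extra $+v(x_0)$ term: it must land as an additive $u$ (equivalently $v$) in front of $F$ and \emph{not} get mistakenly scaled along with $p_t$, which is precisely why the lemma singles out $e^t$ (whose logarithmic derivative is the constant $1$) rather than a general exponential, and why the target equation has coefficient $1$ on the $u$ term.
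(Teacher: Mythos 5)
Your proposal is correct and follows essentially the same route as the paper: transform the test function by $e^{-t}$, apply the product rule so that the extra $+u$ term emerges from the $\partial_t$ slot, and use positive homogeneity of degree one of $F$ in $(p_t,p_y,M)$ (which the paper invokes only implicitly via ``$e^{-t}$ is strictly positive''). Your treatment is if anything more careful, since you make the homogeneity of the envelopes $\underline F,\overline F$ and the (trivial) rescaling of the boundary data explicit; the only blemish is the displayed formula where the $-v(x_0)$ appears to sit in the $V$-slot rather than the $p_t$-slot, but your surrounding prose performs the correct manipulation.
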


\begin{proof}
    We prove the statement for subsolutions;
    the proof for supersolutions is analogous.
    For any $(s, \upsilon, \zeta)$, let $\varphi$ be a viscosity test function touching $e^{t} u$ from above at $(s, \upsilon, \zeta)$.
    Then $e^{-t} \varphi$ touches $u$ from above at $(s, \upsilon, \zeta)$, so, since $u$ is a viscosity subsolution,
    \begin{align*}
        0 &\geq F\bigl(t, y, \partial_t (e^{-t} \varphi)(s, \upsilon, \zeta), \partial_y e^{-s} \varphi(s, \upsilon, \zeta), D^2 e^{-s} \varphi(s, \upsilon, \zeta)\bigr) \\
        &= F\bigl(t, y, e^{-s} [-\varphi(s, \upsilon, \zeta) + \partial_t \varphi(s, \upsilon, \zeta)], e^{-s} \partial_y \varphi(s, \upsilon, \zeta), e^{-s} D^2 \varphi(s, \upsilon, \zeta)\bigr) \\
        &= e^{-s} \varphi(s, \upsilon, \zeta) + e^{-s} F\bigl(t, y, \partial_t \varphi(s, \upsilon, \zeta), \partial_y \varphi(s, \upsilon, \zeta), D^2 \varphi(s, \upsilon, \zeta)\bigr),
    \end{align*}
    where we implicitly use that $e^{-t}$ is strictly positive, so that the Hamiltonians in $F$ are not affected.
    After multiplication by $e^s$, this proves the claim.
\end{proof}

The following definition will be useful in the proof of comparison.
It mirrors the usual definition, but omits the derivatives that are not evaluated in $F$.
\begin{definition}
\label{eq:def jets}
    The so-called second order superjet, or superjet for short, of $u$ at $x = (t, y, z)$ is defined as
    \begin{align*}
        \sndPSupDiff u(x) =
        \{ (\partial_t \varphi, \partial \varphi, D^2 \varphi) :
            \varphi &\in C^2([0,T]\times\dpeyzspace) \\
            &\text{ and $u-\varphi$ has a local maximum at $x$} \}.
    \end{align*}
    As per usual, we also define
    \begin{align*}
        \sndPSupDiffCl u(x) =
        \{ &(p_t, p_y, M) \in \R \times \yspace \times \symmat(\ydim + 1)
            : \\
            &\exists (x^n, p_t^n, p_y^n, M^n) \in \dpedom \times \R \times \yspace \times \symmat(\ydim + 1) \\
            &\text{ such that }
            (p_t^n, p_y^n, M^n) \in \sndPSupDiff u(x^n) \\
            &\text{ and }
            (x^n, p_t^n, p_y^n, M^n) \to (x, p_t, p_y, M)
        \},
    \end{align*}
    where $\symmat(N)$ is the set of symmetric $N \times N$ matrices.
    Finally, define the second order subjet as $\sndPSubDiff u(x) = -\sndPSupDiff (-u)(x)$ and $\sndPSubDiffCl$ analogously.
\end{definition}
As $F$ and $\underline{F}$ are upper and lower semicontinuous, respectively, the limiting procedure in the definition of $\sndPSupDiffCl$ and $\sndPSubDiffCl$ does not pose a problem for defining viscosity solutions using the superjets and subjets.
This equivalent definition is standard, and the reader is referred to \cite{CIL} for details.

By Lemma~\ref{lem:transformedPDE}, it is clear that if \eqref{eqn:properPDE} has comparison, then so does the original equation.
In analyzing \eqref{eqn:properPDE} there remains the difficulty
that $\Hz$ is discontinuous, and in particular that it attains $\infty$.
This problem is exacerbated by the fact that $\Hz(0) = 0$, but $\overline{\Hz}(0) = \infty$.%
\footnote{By definition, $\Hz$ equals $\infty$ whenever $Z=0$, but $X \neq 0$.
Hence, by choosing any limit of $Z, X \to 0$ with these properties, it is clear that $\overline{\Hz}(0) = \infty$.}
The discontinuity problem is overcome by observing that $-\Hz$ is finite for any element in $\sndPSubDiffCl(v)$, as $v$ is a supersolution, and, at the maximizer constructed in the proof, the same holds for elements in $\sndPSupDiffCl(u)$.
The problem due to the semicontinuous envelope at $M = 0$ is overcome by slight perturbations of the penalty functions.
This has to be done with care, as otherwise the property used in handling the discontinuity of $\Hz$ fails.
These two techniques lead us to the following theorem.

\newcommand{\ydoub}{\iota}
\begin{theorem}%
    \label{thm:comparison}
    Let $u$ ($v$) be a linearly growing upper (lower) semicontinuous viscosity subsolution (supersolution) to~\eqref{eqn:properPDE} in $\dpedom$.
    If either $u$ or $v$ is continuous,
    then $u \leq v$ on $\dpeparabbdry$ implies that $u \leq v$ everywhere.
\end{theorem}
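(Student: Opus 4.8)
The plan is to run the standard doubling-of-variables argument for degenerate parabolic equations, as in \cite{CIL}, but with penalty functions chosen carefully so that the discontinuity of $\Hz$ never intervenes. First I would assume for contradiction that $\sup_{\dpedom}(u-v) = \delta > 0$, attained in the interior by linear growth plus the usual $-\eta(|y|^2 + z^2)$ and $-\kappa/(T-t)$ modifications (with $\eta,\kappa$ sent to zero at the end); these extra terms, being smooth and of the right sign, cost an arbitrarily small amount and push the supremum away from $t = T$ and from spatial infinity. The boundary hypothesis $u \leq v$ on $\dpeparabbdry$ together with the fact that one of $u,v$ is continuous ensures that, after these modifications, the maximum of $u - v$ is still strictly positive and located in $\dpedom$.

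Next I would double the $y$ and $z$ variables with quadratic penalties, i.e.\ consider
\[
    \Phi_\varepsilon(t,y,z,\bar y,\bar z) = u(t,y,z) - v(t,\bar y,\bar z) - \frac{1}{2\varepsilon}\bigl(|y-\bar y|^2 + |z-\bar z|^2\bigr) - (\text{the small }\eta,\kappa\text{ terms}),
\]
keeping time un-doubled (a parabolic trick that avoids an extra penalization in $t$; alternatively double $t$ as well and use the parabolic version of the Crandall--Ishii lemma). Let $(t_\varepsilon, y_\varepsilon, z_\varepsilon, \bar y_\varepsilon, \bar z_\varepsilon)$ be a maximizer. Standard estimates give $\frac{1}{\varepsilon}(|y_\varepsilon - \bar y_\varepsilon|^2 + |z_\varepsilon - \bar z_\varepsilon|^2) \to 0$ and, along a subsequence, convergence of the doubled points to a single maximizer $(\hat t,\hat y,\hat z)$ of $u - v$ (minus the small terms) in $\dpedom$. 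The parabolic Crandall--Ishii lemma then produces $p_t \in \R$ and matrices $M \in \sndPSupDiffCl u$, $N \in \sndPSubDiffCl v$ (with the appropriate time-derivative bookkeeping) satisfying the matrix inequality
\[
    \begin{bmatrix} M & 0 \\ 0 & -N \end{bmatrix} \leq \frac{3}{\varepsilon}\begin{bmatrix} I & -I \\ -I & I \end{bmatrix} + (\text{small }\eta\text{ correction}).
\]
Feeding these into the sub- and supersolution inequalities for \eqref{eqn:properPDE} gives, schematically, $u(\hat\cdot) + \underline F(\dots, M) \leq 0$ and $v(\hat\cdot) + \overline F(\dots, N) \geq 0$; subtracting yields $\delta \leq \overline{\Hz}(M) - \underline{\Hz}(N)$ up to vanishing $b$-Lipschitz and $\eta$ terms.

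The main obstacle, and the place where the argument deviates from the textbook recipe, is controlling the right-hand side $\overline{\Hz}(M) - \underline{\Hz}(N)$ in the presence of the discontinuity at $Z = 0$. The key observation, as flagged in the discussion before the theorem, is that since $v$ is a supersolution, $\overline{\Hz}(N) < \infty$ at the relevant point, hence $-\Hz(N)$ is finite; and the aim is to arrange that the same holds for $M$, i.e.\ that $\underline{\Hz}(M) = \Hz(M)$ is finite there too, so that both Hamiltonians are the genuine (continuous-where-finite) $\Hz$. This is achieved by choosing the penalty so that at the maximizer the $(z,\bar z)$-block of $M$ (call it $Z_M$) is strictly negative — concavity of $V(t,y,\cdot)$ from Lemma~\ref{lem:concave}, together with a penalty term of the form $-\lambda z^2$ or a perturbation making the relevant second $z$-derivative strictly negative, forces $Z_M < 0$, hence $\Hz(M) < \infty$ and, by \eqref{eqn:ellipticity} applied through the matrix inequality, $\overline{\Hz}(M) \leq \Hz(N)$ whenever the latter is finite. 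One must verify that this perturbation does not destroy the finiteness property for $N$ (the "done with care" remark): the perturbation is monotone in the right direction, so it only helps. With both Hamiltonians reduced to $\Hz$ on the region where it is continuous, the usual estimate
\[
    \Hz(M) - \Hz(N) = \sup_\beta \tfrac12 \Tr m_\beta m_\beta' M - \sup_\beta \tfrac12 \Tr m_\beta m_\beta' N \leq \tfrac12 \sup_\beta \Tr m_\beta m_\beta'(M - N),
\]
combined with the matrix inequality, bounds the difference by a term that vanishes as $\varepsilon \to 0$ (the $\frac{1}{\varepsilon}(I,-I;-I,I)$ structure kills it against $m_\beta m_\beta'$ which has the same "diagonal" structure, and the $\sigma$ part of $m_\beta$ is $\beta$-independent while the $\beta$ part is handled by the strict negativity of $Z_M$). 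Sending $\varepsilon \to 0$, then $\eta,\kappa \to 0$, gives $\delta \leq 0$, the desired contradiction. The final bookkeeping — that the $b$-term contributes only $\lesssim c_2 \frac{1}{\varepsilon}|y_\varepsilon - \bar y_\varepsilon|^2 \to 0$ using the Lipschitz bound in \assb{}, and that linear growth of $u,v$ makes the spatial penalty coercive — is routine.
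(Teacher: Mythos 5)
Your overall strategy---doubling of variables, Crandall--Ishii, and a small perturbation of the penalty to keep the Hamiltonian finite---is indeed the route the paper takes, but the central novel step is not correctly executed in your sketch. First, you cannot invoke the concavity of $V(t,y,\cdot)$ from Lemma~\ref{lem:concave}: comparison must hold for \emph{arbitrary} sub- and supersolutions $u,v$ (that is what uniqueness requires), so no structural property of the value function is available. Second, the perturbation you propose has the wrong sign and sits on the wrong side: subtracting $\lambda z^2$ in $\Phi$ on the $u$-side shifts the $zz$-entry of the element of $\sndPSupDiffCl u$ \emph{upward} by $2\lambda$, which makes $\overline{\Hz}$ more likely to be $+\infty$, not less. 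The device that actually works is to add $+\varepsilon\zeta^2$ on the \emph{supersolution} side: the resulting element of $\sndPSubDiffCl v$ has $zz$-entry $\hat{Z}_v + 2\varepsilon$, and since the supersolution inequality forces the Hamiltonian there to be finite, one gets $\hat{Z}_v \leq -2\varepsilon < 0$; the Crandall--Ishii ordering $\hat{M}_u \leq \hat{M}_v$ then transfers strict negativity to $\hat{Z}_u$, so that $\overline{\Hz}(\hat{M}_u) = \Hz(\hat{M}_u) \leq \Hz(\hat{M}_v) < \infty$ and the two viscosity inequalities can be subtracted. Without some version of this one-sided argument, the subsolution inequality is vacuous whenever $\hat{Z}_u = 0$ (since $\overline{\Hz}(0)=\infty$) and your contradiction never materializes. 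Relatedly, once both Hamiltonians are finite and $\hat{M}_u \leq \hat{M}_v$, their difference is simply nonpositive by monotonicity of $\Hz$; your attempt to bound it by an $\varepsilon$-dependent error via $\sup_\beta \Tr m_\beta m_\beta'(M-N)$ is both unnecessary and delicate, since the supremum over unbounded $\beta$ does not interact harmlessly with the $\tfrac{1}{\varepsilon}$ blocks.

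A second gap is the unbounded $y$-domain. With only linear growth, the localization $-\eta|y|^2$ leaves maximizers whose size blows up like $1/\eta$, and the error terms $b\cdot D\eta \sim \eta\,\hat{y}$ produced by the localization then do not vanish as $\eta \to 0$. The paper needs a separate preliminary step establishing the a priori Lipschitz-type bound $u(t,y,z) - v(t,\upsilon,z) - 2K|y-\upsilon| \leq E$ (its \eqref{eq.compstep1}) precisely to guarantee $\eta(\hat{y}^2 + \hat{\upsilon}^2)$ stays bounded uniformly, so that $\eta(\hat{y}+\hat{\upsilon}) \to 0$. You dismiss this as routine coercivity, but it is where a nontrivial auxiliary doubling argument lives. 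Finally, the continuity of one of $u,v$ is not used to locate the maximum in the interior (upper semicontinuity suffices for that); it is used to produce a modulus of continuity comparing $u$ at the two doubled time/$z$ arguments, which is what lets the quadratic penalty be shown to vanish given that the a priori bound of the first step only controls differences at \emph{equal} $t$ and $z$.
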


Before we begin the proof, in the following lemma we summarize one step used twice later on.

\begin{lemma}\label{lem:ishiiconsequence}
  Let $h : \yspace \to \R$ a $C^2$ function and
  \[
    A =
    \begin{bmatrix}
      D^2 h(y - \ydoub) & &  -D^2 h(y - \ydoub) & \\
      & 1/\varepsilon & & - 1/\varepsilon \\
      -D^2 h(y - \ydoub) & & D^2 h(y - \ydoub) & \\
      & -1/\varepsilon & & 1/\varepsilon
    \end{bmatrix},
  \]
  for $y, \ydoub \in \yspace$.
  Suppose $M_u$ and $M_v$ are matrices satisfying
  \[
    \begin{bmatrix}
      M_u & \\
      & -M_v
    \end{bmatrix}
    \leq
    A +
    \begin{bmatrix}
      D^2 g(y) & & & \\
      & \hphantom{-2\varepsilon} & & \\
       & & D^2 g(\ydoub) & \\
      &  & & -2\varepsilon 
    \end{bmatrix}
    + \gamma A^2,
  \]
  for some arbitrary constant $\gamma$.

  Let $\sqrt{\Lambda}$ be the bound on $\sigma$.
  That is, $\|\sigma\| \leq \sqrt{\Lambda}$.
  Whenever $\overline{\Hz}(t, y, M_u) < \infty$, it holds that, for some constant $C$ depending on $D^2 h$, $\varepsilon$, and $\Lambda$,
  \begin{enumerate}[label={(\arabic*)}]
    \item \label{ishiiconsequence_bounded}
      if $\| D^2 h \| < \infty$, then
      \[
      \overline{\Hz}(t, y, M_u) - \Hz(s, \ydoub, M_v) \leq 4 \Lambda \| D^2 h \|
      + \Lambda (\| D^2 g(y) \| + \| D^2 g(\ydoub) \|) + \gamma C.
      \]

    \item \label{ishiiconsequence_epsilon}
      if all quantities are implicitly parametrized by $\varepsilon$ such that $D^2 h(y - \ydoub) = \frac{1}{\varepsilon} I$ and $(t-s)^2 + |y - \ydoub|^2 \in o(\varepsilon)$ as $\varepsilon \to 0$, then
      \begin{equation*}
        \overline{\Hz}(t, y, M_u) - \Hz(s, \ydoub, M_v) \leq o(\varepsilon^0)
        + \Lambda (\| D^2 g(y) \| + \| D^2 g(\ydoub) \|) + \gamma C.
      \end{equation*}

  \end{enumerate}
\end{lemma}

\begin{proof}
  Using that
  \[
    m_\beta(t, y) - m_\beta(s, \ydoub) = \begin{bmatrix}
      \sigma(t,y) - \sigma(s, \ydoub) & 0 \\
      0 & 0
    \end{bmatrix},
  \]
  we multiply the matrices in the lemma by
  \[
    \begin{bmatrix}
      m_\beta(t, y)^\top m_\beta(t, y) & m_\beta(s, \ydoub)^\top m_\beta(t, y) \\
      m_\beta(t, y)^\top m_\beta(s, \ydoub) & m_\beta(s, \ydoub)^\top m_\beta(s, \ydoub)
    \end{bmatrix},
  \]
  complete the square, and
  take the trace to obtain
  \begin{multline*}
    \Tr \big[ m_\beta(t, y)^\top m_\beta(t, y) M_u - m_\beta(s, \ydoub)^\top m_\beta(s, \ydoub) M_v \big]
    \\
    \begin{aligned}
      &\leq \Tr \big[ (\sigma(t, y) - \sigma(s, \ydoub))^\top (\sigma(t, y) - \sigma(s, \ydoub)) D^2 h(y - \ydoub) - 2 \varepsilon |\beta|^2 \big] \\
        &\quad+ \Tr \big[ \sigma(t, y)^\top \sigma(t, y) D^2 g(y) + \sigma(s, \ydoub)^\top \sigma(s, \ydoub) D^2 g(\ydoub) \big] + \gamma C,
    \end{aligned}
  \end{multline*}
  where $C$ bounds the terms from $A^2$, which is possible because $A$ is bounded independently of $y$ and $\ydoub$.
  We note that $\beta$ does not appear in the terms bounded by $C$.
  Because
  \[
    \overline{\Hz}(t, y, M_u) - \Hz(s, \ydoub, M_v) \leq \sup_\beta \Tr \Big[ m_\beta(t, y)^\top m_\beta(t, y) M_u - m_\beta(s, \ydoub)^\top m_\beta(s, \ydoub) M_v \Big],
  \]
  and only the term $-2 \varepsilon |\beta|^2$ depends on $\beta$, the optimizer is $\beta = 0$.

  Part \ref{ishiiconsequence_bounded} follows directly from the assumed bounds.

  For part \ref{ishiiconsequence_epsilon},
  the $o(\varepsilon^0)$ term is obtained from the Lipschitz assumption on $\sigma$ and the assumed limiting behavior of $h$, $y$, and $\ydoub$ as $\varepsilon \to 0$.
\end{proof}

In the comparison proof that follows, we proceed in steps:
first we establish the bound \eqref{eq.compstep1}, which is then used later on to construct viscosity test functions and a contradiction.
The general structure for handling the $y$ variable follows \cite[Section 5.D]{CIL}, but must be adjusted to account for the discontinuity in the Hamiltonian.
Indeed, whereas \cite[(5.10)]{CIL} provides a bound for all variables `doubled', due to our Hamiltonian, \eqref{eq.compstep1} cannot be adjusted to include a doubling in the $z$-variable.
This causes difficulties in the subsequent steps, where the $z$-variable necessarily appears `doubled'.
The restriction that either $u$ or $v$ is continuous is used precisely for this reason, because then \eqref{eq.compstep1} holds with the local modulus of continuity added to the bound.
Because this adjustment only holds locally, it introduces a dependence between the penalization variables $\delta$ and $\varepsilon$, which is the cause for the more carfully chosen subsequence.

\begin{proof}[Proof of Theorem~\ref{thm:comparison}]
    As $\tilde{u} = u - \epsilon / t$ is also a subsolution, we have $\tilde{u} \leq v$ for the full boundary $\partial \dpedom$.
    The proof below could thus be completed for $\tilde{u}$ instead of $u$ with this stronger assumption to obtain $\tilde{u} \leq v$ in $\dpedom$, from which $u \leq v$ in $\dpedom$ follows from letting $\epsilon \to 0$.
    Hence, without loss of generality, we may assume that $u \leq v$ on $\partial \dpedom$.
    Moreover, since the domain of $l^*$ is compact, we will denote it by $[0,c]$ with $c\ge 0$.

    \step \label{comparison:linear_bound}
    We begin by showing that for some $E$ and $K$
    \begin{equation}%
        \label{eq.compstep1}
        u(t, y, z) - v(t, \ydoub, z) - 2 K |y - \ydoub|  \leq E < \infty, \quad \forall (t, y, \ydoub, z) \in [0, T] \times \yspace \times \yspace \times \zspace.
    \end{equation}
    If \eqref{eq.compstep1} holds, we are done and proceed to Step \ref{comparison:mainstep}.
    Otherwise, notice that
    the linear growth implies the existence of an $L > 0$ such that
    \[
        u(t, y, z) - v(s, \ydoub, \zeta) \leq L ( 1 + |y| + |\ydoub| ) \text{ on } [0, T] \times \dpeyzspace \times [0, T] \times \dpeyzspace.
    \]
    We use this to define the following family of functions.
    For some constant $C_\eta$ and each $R > 0$, let $\eta_R$ be a $C^2(\yspace)$ function with the properties
    \begin{enumerate*}[label=(\roman*)]
        \item $\eta_R \geq 0$,
        \item \label{eta_2L_growth} $\liminf_{|x| \to \infty} \eta_R(x) / |x| \geq 2 L$,
        \item $|D \eta_R(x)| + \| D^2 \eta_R(x) \| \leq C_\eta$,
        \item $\lim_{R \to \infty} \eta_R(x) = 0$.
    \end{enumerate*}

    Now, let
    \begin{align*}
        \Phi_K(t, y, \ydoub, z, \zeta) = u(t, y, z) - v(t, \ydoub, \zeta) &- 2 K \sqrt{1 + |y - \ydoub|^2} - \eta_R(y) - \eta_R(\ydoub) \\
        &- \frac{1}{2\varepsilon} |z - \zeta|^2 + \varepsilon \zeta^2,
    \end{align*}
    where $\varepsilon \in (0,1)$.
    By the assumption on linear growth and condition \ref{eta_2L_growth} on $\eta_R$, $\Phi_K$ attains a maximum at some point $(\hat{t}, \hat{y}, \hat{\ydoub}, \hat{z}, \hat{\zeta})$.

    Because \eqref{eq.compstep1} does not hold, at the maximum $\Phi_K(\hat{t}, \hat{y}, \hat{\ydoub}, \hat{z}, \hat{\zeta}) > E - \eta_R(\hat{y}) - \eta_R(\hat{\ydoub}) > 0$ for any $R$ large enough.
    This implies that
    \[
        \frac{1}{2\varepsilon} |\hat{z} - \hat{\zeta}|^2
        \leq u(\hat{t}, \hat{y}, \hat{z}) - v(\hat{t}, \hat{\ydoub}, \hat{\zeta}) - 2 K \sqrt{1 + |\hat{y} - \hat{\ydoub}|^2} - \eta_R(\hat{y}) - \eta_R(\hat{\ydoub}) + \varepsilon \hat{\zeta}^2,
    \]
    which is bounded in $\varepsilon$ for a fixed $R$, so $\lim_{\varepsilon \to 0} |\hat{z} - \hat{\zeta}| \to 0$.
    Hence, there exists $\bar{z}$ such that, along a subsequence in $\varepsilon \to 0$, $\hat{z}, \hat{\zeta} \to \bar{z}$.
    Furthermore, as $\Phi_K(\hat{t}, \hat{y}, \hat{\ydoub}, \hat{z}, \hat{\zeta}) \geq \max \Phi_K(t, y, \ydoub, z, z)$,
    \begin{multline*}
        \frac{1}{2\varepsilon} |\hat{z} - \hat{\zeta}|^2
        \leq u(\hat{t}, \hat{y}, \hat{z}) - v(\hat{t}, \hat{\ydoub}, \hat{\zeta}) - 2 K \sqrt{1 + |\hat{y} - \hat{\ydoub}|^2} - \eta_R(\hat{y}) - \eta_R(\hat{\ydoub}) + \varepsilon \hat{\zeta}^2 \\
        - \max_{[0, T] \times \yspace \times \yspace \times \zspace}
        \bigl(u(t, y, z) - v(t, \ydoub, z) - 2 K \sqrt{1 + |y - \ydoub|^2} - \eta_R(y) - \eta_R(\ydoub) + \varepsilon z^2\bigr),
    \end{multline*}
    which converges to 0 by upper semicontinuity, because $\hat{z}, \hat{\zeta} \to \bar{z}$.
    It follows that, by the construction of $\eta_R$, \eqref{eq.compstep1} is satisfied if and only if
    \begin{equation} \label{eq.compstep1.Phi_K.equiv}
        \lim_{R \to \infty} \lim_{\varepsilon \to 0} \Phi_K(\hat{t}, \hat{y}, \hat{\ydoub}, \hat{z}, \hat{\zeta}) < \infty.
    \end{equation}

    We now split into cases, depending on whether there exists a divergent sequence of $R$ such that
    for each fixed $R$ there exists a subsequence of $\varepsilon \to 0$ such that always either
    $(\hat{t}, \hat{y}, \hat{z})$ or $(\hat{t}, \hat{\ydoub}, \hat{\zeta})$ lies on $\partial \dpedom$.
    If so, then both sequences converge to boundary points as $\varepsilon \to 0$.
    By upper semicontinuity and that \eqref{eq.compstep1} is satisfied on the boundary, $u(\hat{t}, \hat{y}, \hat{z}) - v(\hat{t}, \hat{\ydoub}, \hat{\zeta}) - 2 K \sqrt{1 + |\hat{y} - \hat{\ydoub}|^2}$ is bounded from above for sufficiently small $\varepsilon$.
    As the bound depends only on the boundary condition, it is independent of $R$, which implies~\eqref{eq.compstep1.Phi_K.equiv} and thus also~\eqref{eq.compstep1}.

    On the other hand, if no such limit of boundary points exists, then for sufficiently large $R$ and small $\varepsilon$, both $(\hat{t}, \hat{y}, \hat{z})$ and $(\hat{t}, \hat{\ydoub}, \hat{\zeta})$ must be interior points.
    It holds for large $R$ that,
    \[
        2 K |\hat{y} - \hat{\ydoub}| \leq u(\hat{t}, \hat{y}, \hat{z}) - v(\hat{t}, \hat{\ydoub}, \hat{\zeta}).
    \]
    Since $(\hat{t}, \hat{y}, \hat{\ydoub}, \hat{z}, \hat{\zeta})$ is a maximum, by Ishii's lemma \cite[Theorem 3.2]{CIL},
    \begin{gather*}
        (\hat{p}_t, \hat{p}_y + D \eta_R (\hat{y}), \bar{Y}_u + D^2 \eta_R(\hat{y}), \bar{Z}_u, \bar{X}_u) \in \sndPSupDiffCl u(\hat{t}, \hat{y}, \hat{z}), \\
        (\hat{p}_t, \hat{p}_y - D \eta_R (\hat{\ydoub}), \bar{Y}_v - D^2 \eta_R(\hat{\ydoub}), \bar{Z}_v + 2\varepsilon, \bar{X}_v) \in \sndPSubDiffCl v(\hat{t}, \hat{\ydoub}, \hat{\zeta}),
    \end{gather*}
    for $\hat{p}_y = 2 K D_x \sqrt{1 + |x|^2}|_{x = \hat{y} - \hat{\ydoub}}$, $\bar{M}_u \leq \bar{M}_v$, and $\bar{Z}_u \leq \bar{Z}_v$.
    As $v$ is a viscosity supersolution, $F \geq -v > -\infty$.
    Consequently, this implies that $\bar{Z}_v + 2\varepsilon \leq 0$ and thus $\overline{\Hz}(\bar{M}_u)$ is finite.
    Define $\hat{M}_u$ by $(\bar{Y}_u + D^2 \eta_R(\hat{y}), \bar{Z}_u, \bar{X}_u)$ and $\hat{M}_v$ by $(\bar{Y}_v - D^2 \eta_R(\hat{y}), \bar{Z}_v + 2\varepsilon, \bar{X}_v)$.
    Then $\hat{M}_u$ and $\hat{M}_v$ satisfy the assumptions of Lemma~\ref{lem:ishiiconsequence} for some $\gamma$.
    In particular, for each $\gamma$, \cite[Theorem 3.2]{CIL} gives a pair $\hat{M}_u$ and $\hat{M}_v$ with these properties, so $\gamma C$ may be chosen as $o(\varepsilon^0)$.

    Hence, by the viscosity properties of $u$ and $v$ as well as Lemma~\ref{lem:ishiiconsequence}\ref{ishiiconsequence_bounded},
    \begin{align*}
        u(\hat{t}, \hat{y}, \hat{z}) - v(\hat{t}, \hat{\ydoub}, \hat{\zeta})
        &\leq F\bigl(\hat{t}, \hat{\ydoub}, \hat{p}_t, \hat{p}_y - D \eta_R (\hat{\ydoub}), \hat{M}_v\bigr)
        - \underline{F}\bigl(\hat{t}, \hat{y}, \hat{p}_t, \hat{p}_y + D \eta_R (\hat{y}), \hat{M}_u\bigr) \\
        &\leq \sup_{a \in \contrValSet} b(\hat{t}, \hat{y}, a) (\hat{p}_y + D \eta_R(\hat{y})) - \sup_{a \in \contrValSet} b(\hat{t}, \hat{\ydoub}, a) (\hat{p}_y - D \eta_R(\hat{\ydoub})) \\
        &\qquad +  4 \Lambda \| D^2 h \|
          + \Lambda ( \| D^2 \eta_R(\hat{y}) \| + \| D^2 \eta_R(\hat{\ydoub}) \|)
          + o(\varepsilon^0)
        \leq E'.
    \end{align*}
    As $\hat{p}_y$ is bounded independently of $R$ and $\varepsilon$, $E'$ depends on the model parameters, $c_1$, $c_2$, $\Lambda$, and $C_\eta$, and is thus independent of $R$ and $\varepsilon$.
    First letting $\varepsilon \to 0$ and then $R \to \infty$, it follows that \eqref{eq.compstep1.Phi_K.equiv} holds.
    This proves~\eqref{eq.compstep1}.

    \step \label{comparison:mainstep}
    Suppose that there is a point $(\bar{t}, \bar{y}, \bar{z})$ such that $u(\bar{t}, \bar{y}, \bar{z}) - v(\bar{t}, \bar{y}, \bar{z}) = 2 \lambda > 0$.
    Let
    \begin{align*}
        \Phi(t, s, y, \ydoub, z, \zeta)
        = u(t, y, z) - v(s, \ydoub, \zeta)
        &- \frac{1}{2 \varepsilon} \bigl(|t - s|^2 + |y - \ydoub|^2 + |z - \zeta|^2\bigr) \\
        &- \delta (y^2 + \ydoub^2)
        + \varepsilon \zeta^2,
    \end{align*}
    for parameters $\delta \in (0,1)$ and $\varepsilon \in (0,1)$.
    Then, for sufficiently small $\delta > 0$ and $\varepsilon > 0$, $\Phi(\bar{t}, \bar{t}, \bar{y}, \bar{y}, \bar{z}, \bar{z}) \geq \lambda$.
	Because of the linear growth and the quadratic penalty, the semicontinuous function $\Phi$ attains a positive maximum at a point $(\hat{t}, \hat{s}, \hat{y}, \hat{\ydoub}, \hat{z}, \hat{\zeta})$,
    and the maximizers are bounded for each $\delta$, uniformly in $\varepsilon$.

    We will now use the continuity assumption.
    It is clear from the arguments that follow, that it does not matter whether $u$ or $v$ is continuous, so without loss of generality, let $u$ be continuous.
    Then, by continuity and that the maximizers lie in a compact domain $\dpedom^\delta \times \dpedom^\delta$, there exist moduli of continuity $m_\delta$ such that
    \[
        |u(\hat{t}, \hat{y}, \hat{z}) - u(\hat{s}, \hat{y}, \hat{\zeta})| \leq m_\delta(|\hat{t} - \hat{s}|, |\hat{z} - \hat{\zeta}|),
    \]
    for each $\delta > 0$ and all $\varepsilon > 0$.
    As a consequence of this and \eqref{eq.compstep1},
    \begin{align*}
        \frac{1}{2 \varepsilon} \bigl(|\hat{t} - \hat{s}|^2 + |\hat{y} - \hat{\ydoub}|^2 + |\hat{z} - \hat{\zeta}|^2\bigr) &+ \delta (\hat{y}^2 + \hat{\ydoub}^2)
        \leq u(\hat{t}, \hat{y}, \hat{z}) - v(\hat{s}, \hat{\ydoub}, \hat{\zeta}) + \varepsilon \hat{\zeta}^2 \\
        &\leq E + 2 K |\hat{y} - \hat{\ydoub}|
        + m_\delta(|\hat{t} - \hat{s}|, |\hat{z} - \hat{\zeta}|)
        + \varepsilon \hat{\zeta}^2 \\
        &\leq E + \frac{1}{4\varepsilon} |\hat{y} - \hat{\ydoub}|^2 + 4 \varepsilon K^2
        + m_\delta(|\hat{t} - \hat{s}|, |\hat{z} - \hat{\zeta}|)
        + \varepsilon \hat{\zeta}^2.
    \end{align*}
    It is thus clear that, for any $\delta$, $\lim_{\varepsilon \to 0} |\hat{t} - \hat{s}| + |\hat{y} - \hat{\ydoub}| + |\hat{z} - \hat{\zeta}| = 0$.
    As a consequence, for each $n \in \N$ there exists $\delta_n$ and $\varepsilon_n$ such that the right hand side is bounded and hence can be chosen so that $\delta_n (\hat{y} + \hat{\ydoub}) \leq 1/n$ for $\varepsilon \leq \varepsilon_n$.
    Finally, for each $\delta$, the right hand side of
    \begin{multline*}
        \frac{1}{2 \varepsilon} \bigl(|\hat{t} - \hat{s}|^2 + |\hat{y} - \hat{\ydoub}|^2 + |\hat{z} - \hat{\zeta}|^2\bigr) \leq
        u(\hat{t}, \hat{y}, \hat{z}) - v(\hat{t}, \hat{\ydoub}, \hat{\zeta}) - \delta (\hat{y}^2 + \hat{\ydoub}^2) + \varepsilon \hat{\zeta}^2 \\
        - \max_{\dpedom^\delta} (u(t, y, z) - v(t, y, z) - \delta(y^2 + \ydoub^2) + \varepsilon z^2)
    \end{multline*}
    is vanishing along a subsequence of $\varepsilon \to 0$, so we may pick $\varepsilon_n$ such that
    $\frac{1}{2 \varepsilon_n} (|\hat{t} - \hat{s}|^2 + |\hat{y} - \hat{\ydoub}|^2) \leq 1/n$.

    \step
    We now split into two cases depending on whether there exists a $\delta$ for which there is a sequence $(\varepsilon_n)_{n \in \N}$ converging to $0$ such that either $(\hat{t}, \hat{y}, \hat{z})$ or $(\hat{s}, \hat{\ydoub}, \hat{\zeta})$ lies on $\partial \dpedom$ for each $n$.
    Notice that as $\delta$ is fixed, they lie in a bounded subset of $\partial \dpedom$.
    Thus, along a subsequence, $(\hat{t}, \hat{s}, \hat{y}, \hat{\ydoub}, \hat{z}, \hat{\zeta})$ converges to $(\tilde{t}, \tilde{t}, \tilde{y}, \tilde{y}, \tilde{z}, \tilde{z})$ as $\varepsilon \to 0$.
    By the boundary conditions,
    \[
        0 < \lambda \leq \Phi(\tilde{t}, \tilde{t}, \tilde{y}, \tilde{y}, \tilde{z}, \tilde{z}) \leq u(\tilde{t}, \tilde{y}, \tilde{z}) - v(\tilde{t}, \tilde{y}, \tilde{z}) \leq 0,
    \]
    which is a contradiction.

    \step
    Otherwise, there exists a sequence $(\varepsilon_n, \delta_n)_{n \in \N}$, converging to $(0, 0)$, for which
    both $(\hat{t}, \hat{y}, \hat{z})$ and $(\hat{s}, \hat{\ydoub}, \hat{\zeta})$
    remain in the interior
    and, by the observations at the end of \stepref{comparison:mainstep},
    $\delta_n (\hat{y} + \hat{\ydoub}) < 1/n$, and
    $\frac{1}{2 \varepsilon_n} (|\hat{t} - \hat{s}|^2 + |\hat{y} - \hat{\ydoub}|^2) \leq 1/n$.
    At each maximizer, by Ishii's lemma \cite[Theorem 3.2]{CIL},
    \begin{gather*}
        (\hat{p}_t, \hat{p}_y + 2 \delta_n \hat{y}, \bar{Y}_u + 2 \delta_n I, \bar{Z}_u, \bar{X}_u) \in \sndPSupDiffCl u(\hat{t}, \hat{y}, \hat{z}), \\
        (\hat{p}_t, \hat{p}_y - 2 \delta_n \hat{\ydoub}, \bar{Y}_v - 2 \delta_n I, \bar{Z}_v + 2\varepsilon, \bar{X}_v) \in \sndPSubDiffCl v(\hat{s}, \hat{\ydoub}, \hat{\zeta}),
    \end{gather*}
    with $\hat{p}_y = \frac{1}{\varepsilon_n} |\hat{y} - \hat{\ydoub}|$, $\bar{M}_v \geq \bar{M}_u$, and $\bar{Z}_u \leq \bar{Z}_v$.
    Like in \stepref{comparison:linear_bound}, $v$ being a supersolution implies $\bar{Z}_v \leq -2\varepsilon$, which ensures that $\overline{\Hz}(\bar{M}_u) < \infty$.
    Again, define $\hat{M}_u$ by $(\bar{Y}_u + 2 \delta_n I, \bar{Z}_u, \bar{X}_u)$ and $\hat{M}_v$ by $(\bar{Y}_v - 2 \delta_n I, \bar{Z}_v + 2\varepsilon, \bar{X}_v)$.
    Repeating the same arguments as previously,
    $\hat{M}_u$ and $\hat{M}_v$ satisfy the assumptions of Lemma~\ref{lem:ishiiconsequence} with $\gamma C \in o(\varepsilon^0)$.

    Using the viscosity properties of $u$ and $v$ again and Lemma~\ref{lem:ishiiconsequence}\ref{ishiiconsequence_epsilon},
    \begin{align*}
        \lambda
        &\leq u(\hat{t}, \hat{y}, \hat{z}) - v(\hat{s}, \hat{\ydoub}, \hat{\zeta}) \\
        &\leq
        F(\hat{s}, \hat{\ydoub}, \hat{p}_t, \hat{p}_y - 2\delta_n \hat{\ydoub}, \hat{M}_v)
        - \underline{F}(\hat{t}, \hat{y}, \hat{p}_t, \hat{p}_y + 2\delta_n \hat{y}, \hat{M}_u) \\
        &\leq - \inf_{a \in \contrValSet} b(\hat{s}, \hat{\ydoub}, a) (\hat{p}_y - 2 \delta_n \hat{\ydoub})
        + \inf_{a \in \contrValSet} b(\hat{t}, \hat{y}, a) (\hat{p}_y + 2 \delta_n \hat{y})
        + 2 \delta_n \Lambda + o(\varepsilon^0) \\
        &\leq c_2 (|\hat{t} - \hat{s}| + |\hat{y} - \hat{\ydoub}|) (\hat{p}_y + 2 \delta_n \hat{y}) + \sup_{a \in \contrValSet} 4 c_1 (1 + |a|) \delta_n (\hat{\ydoub} + \hat{y}) + 2 \delta_n \Lambda + o(\varepsilon^0).
    \end{align*}
    Since the right hand side vanishes as $n \to \infty$,
    it follows that
    \[
        \lambda \leq \lim_{n \to \infty} u(\hat{t}, \hat{y}, \hat{z}) - v(\hat{s}, \hat{\ydoub}, \hat{\zeta}) \leq 0,
    \]
    which is a contradiction.
\end{proof}

The conclusion that $V$ is the unique viscosity solution in this class is obtained by twice comparing $V$ with any other candidate solution $W$ using Theorem~\ref{thm:comparison} to conclude that $V \leq W \leq V$.
This procedure yields the following corollary.

\begin{corollary}
    \label{cor:uniqueness}
    The value function $V$ is the unique viscosity solution of linear growth.
\end{corollary}

\bibliographystyle{abbrvnat}


\begin{thebibliography}{36}
\providecommand{\natexlab}[1]{#1}
\providecommand{\url}[1]{\texttt{#1}}
\expandafter\ifx\csname urlstyle\endcsname\relax
  \providecommand{\doi}[1]{doi: #1}\else
  \providecommand{\doi}{doi: \begingroup \urlstyle{rm}\Url}\fi

\bibitem[Backhoff and Tangpi(2020)]{OCE-PDE}
J.~Backhoff and L.~Tangpi.
\newblock On the dynamic representation of some time-inconsistent risk measures
  in a {B}rownian filtration.
\newblock \emph{Math. Finan. Econ.}, 14:\penalty0 433--460, 2020.

\bibitem[Bartl et~al.(2020)Bartl, Drapeau, and Tangpi]{Roboptim}
D.~Bartl, S.~Drapeau, and L.~Tangpi.
\newblock Computational aspects of robust optimized certainty equivalents and
  asset pricing.
\newblock \emph{Math. Finance}, 30:\penalty0 287--309, 2020.

\bibitem[B{\"a}uerle and Ott(2011)]{Bauerleinconsistent}
N.~B{\"a}uerle and J.~Ott.
\newblock Markov decision processes with average-value-at-risk criteria.
\newblock \emph{Math. Methods Oper. Res.}, 74\penalty0 (3):\penalty0 361--379,
  2011.

\bibitem[Ben-Tal and Teboulle(2007)]{ben-tal01}
A.~Ben-Tal and M.~Teboulle.
\newblock An old-new concept of convex risk measures: The optimized certainty
  equivalent.
\newblock \emph{Math. Finance}, 17\penalty0 (3):\penalty0 449--476, 2007.

\bibitem[Bene{\v{s}}(1971)]{be71}
V.~Bene{\v{s}}.
\newblock Existence of optimal stochastic control laws.
\newblock \emph{SIAM Journal on Control}, 9\penalty0 (3):\penalty0 446--472,
  1971.

\bibitem[Bensoussan et~al.(2014)Bensoussan, Wong, and Yam]{Bensou-Wong-Yam}
A.~Bensoussan, K.~C. Wong, and S.~C.~P. Yam.
\newblock Mean--variance pre--commitment policies revisited via a mean field
  technique.
\newblock \emph{Recent Advances in Financial Engineering}, 2012:\penalty0
  177--198, 2014.

\bibitem[Bj\"ork et~al.(2017)Bj\"ork, Khapko, and
  Murgoci]{Bjork-Khapka-Murgoci17}
T.~Bj\"ork, M.~Khapko, and A.~Murgoci.
\newblock On time--inconsistent stochastic control in continuous time.
\newblock \emph{Finance Stoch.}, 21:\penalty0 331--360, 2017.

\bibitem[Bordigoni et~al.(2007)Bordigoni, Matoussi, and Schweizer]{Bor-Mat-Sch}
G.~Bordigoni, A.~Matoussi, and M.~Schweizer.
\newblock A stochastic control approach to a robust utility maximization
  problem.
\newblock In F.~E. Benth, G.~D. Nunno, T.~Lindstr{\o}n, B.~{\O}ksendal, and
  T.~Zhang, editors, \emph{Stochastic Analysis and Applications}, volume 2 of
  Abel symposia, pages 125--151. Springer, Berlin, Heidelberg, 2007.

\bibitem[Brunick and Shreve(2013)]{Shrevemarginals}
G.~Brunick and S.~Shreve.
\newblock Mimicking an {I}t\^o process by a solution of a stochastic
  differential equation.
\newblock \emph{Ann. Appl. Probab.}, 23\penalty0 (4):\penalty0 1584--1628,
  2013.

\bibitem[Chow et~al.(2015)Chow, Tamar, Mannor, and Pavone]{CTMP}
Y.~Chow, A.~Tamar, S.~Mannor, and M.~Pavone.
\newblock Risk-sensitive and robust decision-making: a cvar optimization
  approach.
\newblock In \emph{Conference on Neural Information Processing Systems,
  Montreal, Canada}, 2015.

\bibitem[Christensen and Lindensj\"o(2019{\natexlab{a}})]{Christensen19}
S.~Christensen and K.~Lindensj\"o.
\newblock Moment constrained optimal dividends: precommitment $\&$ consistent
  planning.
\newblock \emph{arXiv preprint arXiv:1909.10749}, 2019{\natexlab{a}}.

\bibitem[Christensen and Lindensj\"o(2019{\natexlab{b}})]{Christensen19-lin}
S.~Christensen and K.~Lindensj\"o.
\newblock Time-inconsistent stopping, myopic adjustment $\&$ equilibrium
  stability: with a mean--variance application.
\newblock \emph{arXiv preprint arXiv:1909.11921}, 2019{\natexlab{b}}.

\bibitem[Christensen and Lindensj\"o(2020)]{Christensen20}
S.~Christensen and K.~Lindensj\"o.
\newblock On time--inconsistent stopping problems and mixed strategy stopping
  times.
\newblock \emph{Stoch. Proc. Appl.}, 130\penalty0 (5):\penalty0 2886--2917,
  2020.

\bibitem[Crandall et~al.(1992)Crandall, Ishii, and Lions]{CIL}
M.~Crandall, H.~Ishii, and P.-L. Lions.
\newblock User's guide to viscosity solutions of second order partial
  differential equations.
\newblock \emph{Bull. Amer. Math. Soc.}, 27\penalty0 (1), 1992.

\bibitem[Dugundji(1951)]{dugundji1951}
J.~Dugundji.
\newblock An extension of tietze's theorem.
\newblock \emph{Pacific Journal of Mathematics}, 1\penalty0 (3):\penalty0
  353--367, 1951.

\bibitem[Ekeland and Lazrak(2010)]{EkelandLazrak}
I.~Ekeland and A.~Lazrak.
\newblock The golden rule when preferences are time inconsistent.
\newblock \emph{Math. Financ. Econ.}, 4\penalty0 (1):\penalty0 29--55, 2010.

\bibitem[Fleming and Soner(1993)]{Flem-Soner}
W.~H. Fleming and H.~M. Soner.
\newblock \emph{Controlled Markov Processes and Viscosity Solutions}.
\newblock Springer-Verlag, 1993.

\bibitem[Fleming and Souganidis(1989)]{FlemSoug}
W.~H. Fleming and P.~E. Souganidis.
\newblock On the existence of value functions of two-player, zero-sum
  stochastic differential games.
\newblock \emph{Indiana Univ. Math. J.}, 38\penalty0 (2):\penalty0 293--314,
  1989.

\bibitem[F{\"o}llmer and Schied(2011)]{FS3dr}
H.~F{\"o}llmer and A.~Schied.
\newblock \emph{Stochastic finance}.
\newblock Walter de Gruyter \& Co., Berlin, extended edition, 2011.
\newblock An introduction in discrete time.

\bibitem[Gy{\"o}ngy(1986)]{gyongy}
I.~Gy{\"o}ngy.
\newblock Mimicking the one-dimensional marginal distributions of processes
  having an {I}t\^o differential.
\newblock \emph{Probab. Theory Relat. Fields}, 71\penalty0 (4):\penalty0
  501--516, 1986.

\bibitem[He et~al.(2019)He, Hu, Obloj, and Zhou]{He-Hu-Obloj-Zhou}
X.~He, S.~Hu, J.~Obloj, and X.~Zhou.
\newblock Optimal exit time from casino gambling: strategies of precommitted
  and naive gamblers.
\newblock \emph{SIAM J. Control Optim.}, 57\penalty0 (3):\penalty0 1845--1868,
  2019.

\bibitem[Hu et~al.(2017)Hu, Jin, and Zhou]{Hu-Jin-Zhou17}
Y.~Hu, H.~Jin, and X.~Y. Zhou.
\newblock Time--inconsistent stochastic linear--quadratic control:
  {C}haracterization and uniqueness of equilibrium.
\newblock \emph{SIAM J. Control Optim.}, 55\penalty0 (2):\penalty0 1261--1279,
  2017.

\bibitem[Karnam et~al.(2017)Karnam, Ma, and Zhang]{Kar-Ma-Zha}
C.~Karnam, J.~Ma, and J.~Zhang.
\newblock Dynamic approach for some time inconsistent problems.
\newblock \emph{Ann. Appl. Probab.}, 27\penalty0 (6):\penalty0 3435--3477,
  2017.

\bibitem[Maccheroni et~al.(2006)Maccheroni, Marinacci, and Rustichini]{MMR2006}
F.~Maccheroni, M.~Marinacci, and A.~Rustichini.
\newblock Ambiguity aversion, robustness, and the variational representation of
  preferences.
\newblock \emph{Econometrica}, 74\penalty0 (6):\penalty0 1447--1498, 2006.

\bibitem[Mataramvura and {\O}ksendal(2008)]{Oksendaltrivial}
S.~Mataramvura and B.~{\O}ksendal.
\newblock Risk minimizing portfolios and {HJBI} equations for stochastic
  differential games.
\newblock \emph{Stochastics}, 80\penalty0 (4):\penalty0 317--337, 2008.

\bibitem[Miller and Yang(2017)]{MillerYang}
C.~W. Miller and I.~Yang.
\newblock Optimal control of conditional value-at-risk in continuous time.
\newblock \emph{SIAM J. Control Optim.}, 55\penalty0 (2):\penalty0 856--884,
  2017.

\bibitem[Minty(1970)]{minty1970}
G.~J. Minty.
\newblock On the extension of lipschitz, lipschitz-h{\"o}lder continuous, and
  monotone functions.
\newblock \emph{Bulletin of the American Mathematical Society}, 76\penalty0
  (2):\penalty0 334--339, 1970.

\bibitem[Pflug and Pichler(2016{\natexlab{a}})]{PflugPichlerinconsistency1}
G.~C. Pflug and A.~Pichler.
\newblock Time-inconsistent multistage stochastic programs: martingale bounds.
\newblock \emph{European J. Oper. Res.}, 249\penalty0 (1):\penalty0 155--163,
  2016{\natexlab{a}}.

\bibitem[Pflug and Pichler(2016{\natexlab{b}})]{PflugPichlerinconsistency2}
G.~C. Pflug and A.~Pichler.
\newblock Time-consistent decisions and temporal decomposition of coherent risk
  functionals.
\newblock \emph{Math. Oper. Res.}, 41\penalty0 (2):\penalty0 682--699,
  2016{\natexlab{b}}.

\bibitem[Pham(2009)]{Pham}
H.~Pham.
\newblock \emph{Continuous-time stochastic control and optimization with
  financial applications}, volume~61 of \emph{Stochastic Modelling and Applied
  Probability}.
\newblock Springer-Verlag, Berlin, 2009.

\bibitem[Protter(2004)]{protter01}
P.~E. Protter.
\newblock \emph{Stochastic Integration and Differential Equations}.
\newblock Springer-Verlag, 2004.

\bibitem[Shapiro(2009)]{Shapiro-letter}
A.~Shapiro.
\newblock On a time consistency concept in risk averse multistage stochastic
  programming.
\newblock \emph{Oper. Res. Lett.}, 37\penalty0 (3):\penalty0 143--147, 2009.

\bibitem[S{\^{\i}}rbu(2014)]{Sirbugames}
M.~S{\^{\i}}rbu.
\newblock Stochastic {P}erron's method and elementary strategies for zero-sum
  differential games.
\newblock \emph{SIAM J. Control Optim.}, 52\penalty0 (3):\penalty0 1693--1711,
  2014.

\bibitem[Stroock and Varadhan(1969)]{StVa69}
D.~W. Stroock and S.~R. Varadhan.
\newblock Diffusion processes with continuous coefficients, i.
\newblock \emph{Communications on Pure and Applied Mathematics}, 22\penalty0
  (3):\penalty0 345--400, 1969.

\bibitem[Zheng(1985)]{zheng1985}
W.~A. Zheng.
\newblock Tightness results for laws of diffusion processes application to
  stochastic mechanics.
\newblock In \emph{Annales de l'IHP Probabilit{\'e}s et statistiques},
  volume~21, pages 103--124, 1985.

\bibitem[Zhou and Li(2000)]{ZhouLi}
X.~Y. Zhou and D.~Li.
\newblock Continuous-time mean-variance portfolio selection: a stochastic {LQ}
  framework.
\newblock \emph{Appl. Math. Optim.}, 42\penalty0 (1):\penalty0 19--33, 2000.

\end{thebibliography}

\end{document}